\journalname{Preprint version}
  \definecolor{DarkOrchid} {cmyk}{0.40,0.80,0.20,0}
\definecolor{VioletRed} {cmyk}{0,0.81,0,0}
\definecolor{Orange} {cmyk}{0,0.61,0.87,0}
\newcommand{\dom}{\operatorname{dom}}
\newcommand{\epi}{\ensuremath{\operatorname{epi}}}
\newtheorem{Thm}{Theorem}[section]
\newtheorem{Cor}{Corollary}[section]
\newtheorem{Def}{Definition}[section]
\newtheorem{Exa}{Example}[section]
\newtheorem{Lem}{Lemma}[section]
\newtheorem{Pro}{Proposition}[section]
\newtheorem{Rem}{Remark}[section]
\begin{document}

\title{An additive subfamily of enlargements of a maximally monotone operator%
}
\dedication{Dedicated to professor L.~Thibault}

\author{Regina S. Burachik  \and Juan Enrique Mart\'\i nez-Legaz \thanks{ The
research of Juan-Enrique Mart{í}nez-Legaz was supported by the MINECO of
Spain, Grant MTM2011-29064-C03-01, and the Australian Research Council,
project DP140103213. He is affiliated to MOVE (Markets, Organizations and
Votes in Economics).}\and %
Mahboubeh Rezaie \and Michel Th\' era\thanks{The research of Michel Théra was partially supported by the MINECO of Spain,
Grant MTM2011-29064-C03-03, and the Australian Research Council, project
DP110102011.}}
\institute{First author \at School of Information Technology and  Mathematical Sciences,
 University of South Australia, Mawson Lakes, SA 5095, Australia\\
              \email{regina.burachik@unisa.edu.au}                      \and
          Second author \at
             Departament d'Economia i
  d'Historia Economica, Universitat Autonoma de Barcelona,
  Spain\\
 \email{Juan-Enrique.Martinez-Legaz@uab.cat}
 \and
Third author\at
            University of Isfahan, Iran\\
 \email{mrezaie@sci.ui.ac.ir}
 \and
Fourth author \at
            Universit\' e de Limoges, France and  Centre for Informatics and Applied Optimisation,  Federation University  Australia\thanks{MTM2011-29064-C03(03) from MINECO}\\
 \email{michel.thera@unilim.fr}
 }

\date{Received: date / Accepted: date}
\maketitle

\begin{abstract}
We introduce a subfamily of additive enlargements of a maximally monotone
operator. Our definition is inspired by the early work of Simon Fitzpatrick. These enlargements constitute  a subfamily of the family of
enlargements  introduced by Svaiter.  When the  operator  under consideration is the subdifferential of a convex lower semicontinuous proper function, we prove that some members of the subfamily are
 \emph{%
smaller} than the classical ${\varepsilon }$-subdifferential enlargement widely used in convex analysis. We also
recover the epsilon-subdifferential within the subfamily. Since they are all
additive, the enlargements in our subfamily can be seen as structurally
closer to the $\varepsilon $-subdifferential enlargement. 
\end{abstract}

\keywords{Maximally monotone operator \and $\varepsilon$-subdifferential
mapping \and subdifferential operator \and  convex lower semicontinuous
function\and Fitzpatrick function \and enlargement of an operator \and  Brø%
ndsted- Rockafellar enlargements \and additive enlargements \and Brøndsted-
Rockafellar property \and Fenchel-Young function.}

\subtitle{Enlargements of a maximally monotone operator}




\subclass{49J52 \and 48N15 \and 90C25 \and  90C30 \and  90C46}

\section{Introduction}

\label{intro} Let $X$ be a real Banach space with continuous dual $X^{\ast }$%
. By a generalized equation governed by a maximally monotone operator $%
T:X\rightrightarrows X^{\ast }$, we mean the problem of finding $x\in X\;%
\text{such that}$ 
\begin{equation}
0\in T(x).  \label{A}
\end{equation}%
This model has been extensively used as a mathematical formulation of
fundamental problems in optimization and fixed point theory. Main
illustrations follow.

\begin{itemize}
\item If $X$ is a Hilbert space, $I$ is the identity map, $%
F:X\longrightarrow X$ is a nonexpansive mapping, and $T = I - F$, then
solving (\ref{A}) is equivalent to finding a fixed point of $F$.

\item If $T$ is a maximally monotone operator from a Hilbert space into
itself, then the set of solutions of (\ref{A}) is the set of fixed points of
the so-called resolvent map $R:=(I+\lambda T)^{-1}$, with $\lambda >0,$ or
the set of fixed points of the Cayley operator $C:=2R-I$.

\item As observed by Rockafellar \cite[Theorem 37.4]{roc70}, when $L:X\times
X\to \mathbb{R}$ is a concave-convex function (for instance the Lagrangian
of a convex program), finding a saddle point of $L$ is equivalent to solving 
$(0,0)\in \partial L(x,y)$, where $\partial
L(x,y)=\partial_{x}(-L)(x,y)\times\partial_{y}L(x,y)$, and $\partial_x$ and $%
\partial_y$ are the convex subdifferentials operators with respect to the
first and the second variable, respectively.

\item If $f$ is a  lower semicontinuous proper convex function and $%
T=\partial f$, the subdifferential of $f$, then the set of solutions of (\ref%
{A}) is the set of minimizers of $f$.
\end{itemize}

Solving inclusion (\ref{A}) is tantamount to finding a point of the form $%
(x,0)$ in the graph of $T$. If $T$ is not point-to-point, then it lacks
semicontinuity properties. Namely, if $Tx$ is not a singleton, then $T$
cannot be inner-semicontinuous at $x$ (see \cite[Theorem 4.6.3]{BI}). This
fact makes the problem ill-behaved, making the required computations hard.
Enlargements of $T$ are point-to-set mappings (the terms set-valued mapping
and multifunction are also used) which have a graph larger than the graph of 
$T$. These mappings, however, have better continuity properties than $T$
itself. Moreover, they stay \textquotedblleft close" to $T$, so they allow
to define perturbations of problem \eqref{A}, without losing information on $%
T$. In this way, we can define well-behaved approximations of problem %
\eqref{A}, which (i) are numerically more robust, and (ii) whose solutions
approximate accurately the solutions of \eqref{A}. The use of enlargements
in the study of problem \eqref{A} has been a fruitful approach, from both
practical and theoretical reasons. A typical example of the usefulness of
enlargements in the analysis of \eqref{A} arises when considering a convex
optimization problem, i.e., the case in which $T=\partial f$, where $%
f:X\rightarrow \mathbb{R}\cup \{+\infty \}$ is a proper, convex, and lower
semicontinuous 
 function. It is a well-known fact that $%
T=\partial f$ is maximally monotone. This has been proved by Moreau for
Hilbert spaces \cite{moreau} and by Rockafellar \cite{R2} for Banach spaces.
The \emph{$\epsilon $-subdifferential of }$f$, introduced by Brøndsted and
Rockafellar in \cite{BR} (see Definition \ref{eps-subdiff:def}), is an
enlargement of $T=\partial f$ which had a crucial role in the development of
algorithms for solving \eqref{A}, as well as in allowing a better
understanding of the properties of the mapping $\partial f$ itself (see,
e.g., \cite{R2}). This is why the $\epsilon $-subdifferential has been
intensively studied since its introduction in 1965, not only from an
abstract point of view, but also for constructing specific numerical methods
for convex nonsmooth optimization (see, e.g., \cite{AIS,Kiw1,Kiw2,LeS,ScZ}).
Using the optimization problem as a benchmark, but having the general
problem \eqref{A} in mind, it is relevant to study enlargements of an
arbitrary maximally monotone operator $T$. To be useful, the enlargements of 
$T$ must share with the $\epsilon $-subdifferential most of its good
properties. By good properties we mean local boundedness, demi-closedness of
the graph, Lipschitz continuity, and Brøndsted-Rockafellar property. Indeed,
given an arbitrary maximally monotone operator $T$ defined on a reflexive
Banach space, Svaiter introduced in \cite{SV} a family of enlargements,
denoted by $\mathbb{E}(T)$, which share with the $\epsilon $-subdifferential
all these good properties. There are, however, properties of the $\epsilon $%
-subdifferential which are not shared by every element of $\mathbb{E}(T)$.
To make this statement precise, we recall the largest member of the family $%
\mathbb{E}(T)$, denoted by $T^{\mathrm{BE}}$. The enlargement $T^{\mathrm{BE}%
}:\mathbb{R}_{+}\times X\rightrightarrows X^{\ast }$ has been the intense
focus of research (see, e.g. \cite{BI,BIS,BSS1,BS99,BS02,ML96,RT,SV}), and
is defined as follows. We say that%
\begin{equation}
x^{\ast }\in T^{\mathrm{BE}}(\epsilon ,x)\,\iff\, \forall (y,y^{\ast })\in%
\mathop{\rm gph\,} T\,\text{ we have }\langle y-x,y^{\ast }-x^{\ast }\rangle
\geq -\epsilon .  \label{teps}
\end{equation}%
The discrepancy between some elements of $\mathbb{E}(T)$ and the $\epsilon $%
-subdifferential arises from the fact that, when $T=\partial f$, the biggest
enlargement $T^{\mathrm{BE}}$ is larger than the $\epsilon $%
-subdifferential. Namely, ${\partial }_{\epsilon }f(\cdot )\subset ({%
\partial }f)^{BE}(\epsilon ,\cdot )$, and the inclusion can be strict, as
noticed by Martínez-Legaz and Théra, see \cite{ML96}. Hence, it is natural
to expect that some properties of the $\epsilon $-subdifferential \emph{will
not} be shared by every element of $\mathbb{E}(T)$, and in particular, they
will not be shared by $T^{\mathrm{BE}}$. Such a property is \emph{additivity}%
. In the context of enlargements of arbitrary maximally monotone operators,
this property was introduced in \cite{BS99} and further studied in \cite%
{SV,S03}. It is stated as follows. An enlargement $E:\mathbb{R}_{+}\times
X\rightrightarrows X^{\ast }$ is \emph{additive} if for every $x_{1}^{\ast
}\in E(${$\epsilon $}${_{1}},x_{1})$ and every $x_{2}^{\ast }\in E(${$%
\epsilon $}${_{2}},x_{2})$, it holds that 
\begin{equation*}
\langle x_{1}-x_{2},x_{1}^{\ast }-x_{2}^{\ast }\rangle \geq -(\epsilon
_{1}+\epsilon _{2}).
\end{equation*}%
The $\epsilon $-subdifferential is additive. Moreover, it is \emph{maximal}
among all those enlargements of $\partial f$ with this property. In other
words, if another enlargement of $\partial f$ is additive and contains the
graph of the $\epsilon $-subdifferential, then it must coincide with the $%
\epsilon $-subdifferential enlargement. We describe the latter property as
being \emph{maximally additive} (or \emph{max-add}, for short). Namely, an
enlargement $E$ is max-add when it is additive and, if the graph of another
additive enlargement $E^{\prime }$ contains the graph of $E$, then we must
have $E=E^{\prime }$. Since the $\epsilon $-subdifferential is max-add, the
members of the family $\mathbb{E}(T)$ that are max-add do share an extra
property with the $\epsilon $-subdifferential, and in this sense, they can
be seen as structurally \textquotedblleft closer\textquotedblright\ to the $%
\epsilon $-subdifferential. As hinted above, not all enlargements $E\in 
\mathbb{E}(T)$ are additive. However, it was proved in \cite{SV} that the
smallest enlargement, denoted by $T^{\mathrm{SE}}$, is additive. The
existence of a max-add element in $\mathbb{E}(T)$ is then obtained in \cite%
{SV} as a consequence of Zorn's lemma. In the present paper, we define a
whole family of additive elements of $\mathbb{E}(T)$, denoted by $\mathbb{E}%
_{\mathcal{H}}(T)$. The family $\mathbb{E}_{\mathcal{H}}(T)$ has max-add
elements, and the existence of these elements is deduced through a
constructive proof. For the case in which $T=\partial f$, we show that some
specific elements of $\mathbb{E}_{\mathcal{H}}(T)$ are contained in the $%
\epsilon $-subdifferential enlargement. Additionally, a specific element of
our family coincides with the $\epsilon $-subdifferential when $T=\partial f$%
.

The layout of the paper is as follows. First, we define our family of
enlargements of a maximally monotone operator $T$. Our definition is
inspired by the early work of Fitzpatrick presented in \cite{F}, but can as
well be seen as a subfamily of $E(T)$. Second, we prove that all members of
our subfamily are additive. We also introduce a new definition related to
additivity, which helps us in the proofs. We deduce, in a constructive way,
the existence of max-add elements in $\mathbb{E}(T)$. Finally, we consider
the case $T=\partial f$. For this case we prove that some members of the
subfamily are \emph{smaller} than the {$\epsilon $}-subdifferential
enlargement, and we recover the $\epsilon $-subdifferential as a member of
our subfamily.


\section{Basic Definitions}

Throughout this paper, we assume that $X$ is a real \textbf{reflexive}
Banach space with continuous dual $X^{\ast }$, and pairing between them
denoted by $\langle \cdot ,\cdot \rangle .$ We will use the same symbol $%
\Vert \cdot \Vert $ for the norms in $X$ and $X^{\ast }$, and $w$ will stand
for the weak topologies on $X$ and $X^{\ast }$. We consider the Cartesian
product $X\times X^{\ast }$ equipped with the product topology determined by
the norm topology in $X$ and the weak topology in $X^{\ast }.$ In this case
the dual of $X\times X^{\ast }$ can be identified with $X^{\ast }\times X$
and hence, the dual product is defined as $\langle (x,x^{\ast }),(y^{\ast
},y)\rangle =\langle x,y^{\ast }\rangle +\langle y,x^{\ast }\rangle .$

For a given (in general, multivalued) operator $T:X\rightrightarrows X^{\ast
},$ its graph is denoted by 
\begin{equation*}
\mathop{\rm gph\,}(T):=\{(x,x^{\ast })\in X\times X^{\ast }:x^{\ast }\in
T(x)\}.
\end{equation*}%
%
%
%
%
%
%
%
%
%
%
%
%
%
%
Recall that $T:X\rightrightarrows X^{\ast }$ is said to be \emph{monotone}
if and only if 
\begin{equation*}
\langle y-x,y^{\ast }-x^{\ast }\rangle \geq 0\hspace{1cm}\forall (x,x^{\ast
}),(y,y^{\ast })\in \mathop{\rm gph\,}(T).
\end{equation*}%
A monotone operator $T$ is called \emph{maximally monotone } if and only if
the condition $\langle y-x,y^{\ast }-x^{\ast }\rangle \geq 0$ for every $%
(y,y^{\ast })\in \mathop{\rm gph\,}(T),$ implies $(x,x^{\ast })\in %
\mathop{\rm gph\,}(T)$. Equivalently, it amounts to saying that $T$ has no
monotone extension (in the sense of graph inclusion).

In what follows, $f:X\rightarrow \mathbb{R}\cup \{+\infty \}$ will be a
convex function. Recall that $f$ is \textit{proper} if the set $\mathrm{dom}%
\left( f\right) :=\{x\in X:f(x)<+\infty \}$ is nonempty. The \emph{%
subdifferential} of $f$ is the multivalued mapping $\partial
f:X\rightrightarrows X^{\ast }$ defined by 
\begin{equation}
{\partial }f(x):=\{x^{\ast }\in X^{\ast }:f(y)-f(x)\geq \langle y-x,x^{\ast
}\rangle ,\,\forall y\in X\},  \label{subdiff:def}
\end{equation}%
if $x\in \mathrm{dom}\left( f\right) $, and ${\partial }f(x):=\emptyset $,
otherwise. Given $\epsilon \geq 0,$ the $\epsilon $-\emph{subdifferential of}
$f$ is the multivalued mapping $\partial _{\epsilon }f:X\rightrightarrows
X^{\ast }$ defined by 
\begin{equation}
{\partial }_{\epsilon }f(x):=\{x^{\ast }\in X^{\ast }:f(y)-f(x)\geq \langle
y-x,x^{\ast }\rangle -\epsilon ,\,\forall y\in X\},  \label{eps-subdiff:def}
\end{equation}%
if $x\in \mathrm{dom}\left( f\right) $, and ${\partial }_{\epsilon
}f(x):=\emptyset $, otherwise. The case $\epsilon =0$ gives the
subdifferential of $f$ at $x$. The set $\partial _{\epsilon }f(x)$ is
nonempty for every $\epsilon >0$ if and only if $f$ is lower semicontinuous  at $x$. Note
that the $\epsilon $-subdifferential can be viewed as an approximation of
the subdifferential. Indeed, in \cite[Theorem 1]{ML96} a formula expressing,
for a   lower semicontinuous  convex extended-real-valued function, its $\epsilon -$%
subdifferential in terms of its subdifferential was established.

As we will see later in Subsection 2.1, enlargements are multifunctions
defined on $\mathbb{R}_{+}\times X$. Consequently, we need a different
notation for the epsilon-subdifferential \eqref{eps-subdiff:def}. This
enlargement will be denoted as follows: 
\begin{equation*}
\breve{{\partial }}f(\epsilon ,x):=\partial _{\epsilon }f(x).
\end{equation*}%
We call the enlargement $\breve{{\partial }}f$ the \emph{\ Brø%
ndsted-Rockafellar enlargement of} ${\partial }f.$ The Fenchel-Moreau
conjugate of $f$ is denoted by $f^{\ast }:X^{\ast }\rightarrow \mathbb{R}%
\cup \{+\infty \}$ and is defined by 
\begin{equation}
f^{\ast }(x^{\ast }):=\sup \{\langle x,x^{\ast }\rangle -f(x):x\in X\}.
\label{eq:1}
\end{equation}%
Observe that $f^{\ast }$ is  lower semicontinuous  with respect to the weak topology on $%
X^{\ast }.$ In what follows, we shall denote by $f^{FY}$ the Fenchel-Young
function associated to $f$: 
\begin{equation*}
f^{FY}(x,x^{\ast }):=f(x)+f^{\ast }(x^{\ast })\;\text{for all }\;(x,x^{\ast
})\in X\times X^{\ast }.
\end{equation*}%
Then $f^{FY}$ is a convex, proper and $(\Vert \cdot\Vert \times w)$-lower semicontinuous 
function on $X\times X^{\ast }$ and it is well known that $f^{FY}$
completely characterizes the graph of the subdifferential of $f$: 
\begin{equation}
\partial f(x)=\{x^{\ast }\in X^{\ast }:f^{FY}(x,x^{\ast })=\langle x,x^{\ast
}\rangle \}.  \label{eq:9}
\end{equation}%
Moreover, $f^{FY}$ also completely characterizes the graph of the Brø%
ndsted-Rockafellar enlargement of $\partial f$. Namely, 
\begin{equation}
x^{\ast }\in \breve{{\partial }}f(\epsilon ,x)\hbox{ if and only if }%
f^{FY}(x,x^{\ast })\leq \langle x,x^{\ast }\rangle +\epsilon .  \label{eq:10}
\end{equation}%
If $Z$ is a general Banach space and $f,g:Z\rightarrow \mathbb{R}\cup
\{+\infty \}$, the \emph{infimal convolution} of $f$ with $g$ is denoted by $%
f\oplus g$ and defined by 
\begin{equation*}
(f\oplus g)(z):=\inf_{{\tiny z_{1}+z_{2}=z}}\{f(z_{1})+g(z_{2})\}.
\end{equation*}

If $q:Z\rightarrow \mathbb{R}\cup \{+\infty \}$, the \emph{\ closure} of $q$
is denoted by $\mathrm{cl}\,(q)$ and defined by: 
\begin{equation*}
\epi (\mathrm{cl}\,(q))=\mathrm{cl}\,(\epi (q)).
\end{equation*}%
We will use the following well-known property: 
\begin{equation}
(f+g)^{\ast }=\mathrm{cl}\,\mathit{(f^{\ast }\oplus g^{\ast })}\leq \mathit{%
(f^{\ast }\oplus g^{\ast })}.  \label{eq:12}
\end{equation}

\subsection{The family $\mathbb{E}(T)$}

We mentioned above two examples of enlargements, the enlargement $\breve{%
\partial}f$ of $T=\partial f$, and the enlargement $T^{\mathrm{BE}}$ of an
arbitrary maximally monotone operator. Each of these is a member of a family
of enlargements of $\partial f$ and $T$, respectively. For a maximally
monotone operator $T$, denote by $\mathbb{E}(T)$ the following family of
enlargements defined in \cite{SV} and \cite{BS02}.

\begin{Def}
\label{def:enl-fam} Let $T:X\rightrightarrows X^{\ast }$. We say that a
point-to-set mapping\/ $E:\mathbb{R}_{+}\times X\rightrightarrows X^{\ast }$
belongs to the family%
\index{enlargement!family of} $\mathbb{E}(T)$ when

\begin{itemize}
\item[$(E_{1})$] $T(x)\subset E(\epsilon ,x)$ for all $\epsilon \geq 0,x\in
X $;

\item[$(E_{2})$] If $0\leq \epsilon _{1}\leq \epsilon _{2}\,$, then $%
E(\epsilon _{1},x)\subset E(\epsilon _{2},x)$ for all $x\in X$;

\item[$(E_{3})$] The transportation formula holds for $E$. More precisely,
let $x_{1}^{\ast }\in E(${$\epsilon $}$_{1},x_{1})$, $x_{2}^{\ast }\in E(${$%
\epsilon $}$_{2},x_{2}),$ and $\alpha \in \lbrack 0,1]$. Define 
\begin{equation*}
\hat{x}:=\alpha x_{1}+(1-\alpha )x_{2},
\end{equation*}%
\begin{equation*}
\hat{x}^{\ast }:=\alpha x_{1}^{\ast }+(1-\alpha )x_{2}^{\ast },
\end{equation*}%
\begin{eqnarray*}
\epsilon &:=&\alpha {\epsilon }_{1}+(1-\alpha )\epsilon _{2}+\alpha \langle
x_{1}-\hat{x},x_{1}^{\ast }-\hat{x}^{\ast }\rangle +(1-\alpha )\langle x_{2}-%
\hat{x},x_{2}^{\ast }-\hat{x}^{\ast }\rangle \\
&=& \alpha {\epsilon }_{1}+(1-\alpha )\epsilon _{2}+\alpha (1-\alpha)\langle
x_{1}-{x_2},x_{1}^{\ast }-x_{2}^{\ast }\rangle .
\end{eqnarray*}
\end{itemize}

Then $\epsilon \geq 0$ and $\hat{x}^{\ast }\in E(\epsilon ,\hat{x})$.
\end{Def}

The following lemma, which is well-known but hard to track down, states that
the transportation formula\ holds for the Brøndsted-Rockafellar enlargement.
We include its simple proof here for convenience of the reader.

\begin{Lem}
\label{TF} Let $f:X\rightarrow \mathbb{R}\cup \{+\infty \}$ be convex. Then
the transportation formula holds for $\breve{\partial}f.$
\end{Lem}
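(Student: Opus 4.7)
The plan is to exploit the characterization \eqref{eq:10}, namely $x^{*}\in\breve{\partial}f(\epsilon,x)$ if and only if $f(x)+f^{*}(x^{*})\le\langle x,x^{*}\rangle+\epsilon$, together with convexity of both $f$ and its conjugate $f^{*}$. With this reformulation, the transportation formula reduces to two convex-averaging inequalities plus one bilinear identity.

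First I would verify $\hat{\epsilon}\ge 0$. The Fenchel--Young inequality applied to the ``cross pairs'' $(x_{1},x_{2}^{*})$ and $(x_{2},x_{1}^{*})$ gives $f(x_{1})+f^{*}(x_{2}^{*})\ge\langle x_{1},x_{2}^{*}\rangle$ and $f(x_{2})+f^{*}(x_{1}^{*})\ge\langle x_{2},x_{1}^{*}\rangle$; summing these and subtracting the two hypotheses $f(x_{i})+f^{*}(x_{i}^{*})\le\langle x_{i},x_{i}^{*}\rangle+\epsilon_{i}$, $i=1,2$, yields the additivity-type bound
\[
\langle x_{1}-x_{2},\,x_{1}^{*}-x_{2}^{*}\rangle \;\ge\; -(\epsilon_{1}+\epsilon_{2}).
\]
Substituting this into the second expression for $\hat{\epsilon}$ and using the elementary identities $\alpha-\alpha(1-\alpha)=\alpha^{2}$ and $(1-\alpha)-\alpha(1-\alpha)=(1-\alpha)^{2}$, I obtain $\hat{\epsilon}\ge\alpha^{2}\epsilon_{1}+(1-\alpha)^{2}\epsilon_{2}\ge 0$.

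Next, I would establish $f(\hat{x})+f^{*}(\hat{x}^{*})\le\langle\hat{x},\hat{x}^{*}\rangle+\hat{\epsilon}$, which by \eqref{eq:10} is exactly $\hat{x}^{*}\in\breve{\partial}f(\hat{\epsilon},\hat{x})$. Convexity of $f$ and of $f^{*}$ (the latter being automatic, since $f^{*}$ is a pointwise supremum of affine functionals) yields
\[
f(\hat{x})+f^{*}(\hat{x}^{*})\;\le\;\alpha\bigl(f(x_{1})+f^{*}(x_{1}^{*})\bigr)+(1-\alpha)\bigl(f(x_{2})+f^{*}(x_{2}^{*})\bigr),
\]
and the two hypotheses bound the right-hand side by $\alpha\langle x_{1},x_{1}^{*}\rangle+(1-\alpha)\langle x_{2},x_{2}^{*}\rangle+\alpha\epsilon_{1}+(1-\alpha)\epsilon_{2}$. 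A direct bilinear expansion verifies the identity
\[
\alpha\langle x_{1},x_{1}^{*}\rangle+(1-\alpha)\langle x_{2},x_{2}^{*}\rangle\;=\;\langle\hat{x},\hat{x}^{*}\rangle+\alpha(1-\alpha)\langle x_{1}-x_{2},\,x_{1}^{*}-x_{2}^{*}\rangle,
\]
so that combining it with the previous bound and the definition of $\hat{\epsilon}$ closes the argument.

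The main obstacle is essentially bookkeeping: spotting the bilinear identity above, which reveals that the cross term $\alpha(1-\alpha)\langle x_{1}-x_{2},\,x_{1}^{*}-x_{2}^{*}\rangle$ built into $\hat{\epsilon}$ is precisely what converts the convex combination of diagonal pairings $\alpha\langle x_{1},x_{1}^{*}\rangle+(1-\alpha)\langle x_{2},x_{2}^{*}\rangle$ into the single pairing $\langle\hat{x},\hat{x}^{*}\rangle$. Once this is in hand, no nontrivial analytic ingredient is needed; in particular, the automatic convexity of $f^{*}$ allows us to bypass any appeal to lower semicontinuity of $f$, which is consistent with the fact that the lemma is stated for an arbitrary convex $f$.
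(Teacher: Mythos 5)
Your proof is correct, but it takes a different route from the paper's. The paper argues directly from the definition \eqref{eps-subdiff:def}: for $\epsilon\geq 0$ it tests the two subgradient inequalities at the cross points $y=x_{2}$ and $y=x_{1}$, and for the inclusion $\hat{x}^{\ast}\in\breve{\partial}f(\epsilon,\hat{x})$ it takes the convex combination of the two inequalities at an arbitrary test point $z$ and invokes only the convexity of $f$. You instead pass to the conjugate characterization \eqref{eq:10}, so that membership becomes the single scalar inequality $f^{FY}(x,x^{\ast})\leq\langle x,x^{\ast}\rangle+\epsilon$, and then the transportation formula falls out of the joint convexity of $(x,x^{\ast})\mapsto f(x)+f^{\ast}(x^{\ast})$ together with the bilinear identity $\alpha\langle x_{1},x_{1}^{\ast}\rangle+(1-\alpha)\langle x_{2},x_{2}^{\ast}\rangle=\langle\hat{x},\hat{x}^{\ast}\rangle+\alpha(1-\alpha)\langle x_{1}-x_{2},x_{1}^{\ast}-x_{2}^{\ast}\rangle$; your first step (Fenchel--Young on the cross pairs) is the conjugate-language counterpart of the paper's cross substitution. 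What your approach buys is generality and a conceptual explanation: it is exactly the specialization to $h=f^{FY}$ of the fact that $L^{h}$ satisfies the transportation formula for \emph{any} convex $h\geq\langle\cdot,\cdot\rangle$ representing $T$ (the mechanism behind Remark \ref{rem:Lh=E}), and it shows the cross term in $\hat{\epsilon}$ is precisely the defect in the bilinear identity. What the paper's proof buys is self-containedness: it never mentions $f^{\ast}$, so there is nothing to check about properness or the validity of \eqref{eq:10} for non-lsc $f$. On that last point your argument is fine as written -- the equivalence \eqref{eq:10} only uses the definition of $f^{\ast}$ as a supremum, not lower semicontinuity, and for improper $f$ the statement is vacuous -- but it is worth saying this explicitly rather than only remarking that $f^{\ast}$ is automatically convex.
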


\begin{proof}
Assume that $x_{1}^{\ast }\in \breve{\partial}f(\epsilon _{1},x_{1})$, $%
x_{2}^{\ast }\in \breve{\partial}f(\epsilon _{2},x_{2})$ and $\alpha \in
\lbrack 0,1],$ and let $\hat{x},$ $\hat{x}^{\ast }$ and $\epsilon $ be as in
condition $(E_{3})$ of Definition \ref{def:enl-fam}. Let us first show that $%
\epsilon \geq 0$. By assumption, we have 
\begin{equation*}
\begin{array}{rcl}
f(x_{2})-f(x_{1}) & \geq & \langle x_{2}-x_{1},x_{1}^{\ast }\rangle
-\epsilon _{1}, \\ 
f(x_{1})-f(x_{2}) & \geq & \langle x_{1}-x_{2},x_{2}^{\ast }\rangle
-\epsilon _{2}.%
\end{array}%
\end{equation*}%
Summing up these inequalities and re-arranging the resulting expression
gives 
\begin{equation*}
\langle x_{1}-x_{2},x_{1}^{\ast }-x_{2}^{\ast }\rangle \geq -\epsilon
_{1}-\epsilon _{2}.
\end{equation*}%
We can now write 
\begin{equation*}
\alpha (1-\alpha )\langle x_{1}-x_{2},x_{1}^{\ast }-x_{2}^{\ast }\rangle
\geq \alpha (1-\alpha )(-\epsilon _{1}-\epsilon _{2})\geq -\alpha \epsilon
_{1}-(1-\alpha )\epsilon _{2}.
\end{equation*}%
Using the definition of $\epsilon$ in $(E_{3})$ , we deduce that $\epsilon
\geq 0$. In order to finish the proof, we use the assumption on $x_{1}^{\ast
},x_{2}^{\ast }$ to write 
\begin{equation*}
\begin{array}{rcl}
\alpha \left( f(z)-f(x_{1})\right) & \geq & \alpha \left( \langle
z-x_{1},x_{1}^{\ast }\rangle -\epsilon _{1}\right) \\ 
(1-\alpha )\left( f(z)-f(x_{2})\right) & \geq & (1-\alpha )\left( \langle
z-x_{2},x_{2}^{\ast }\rangle -\epsilon _{2}\right) . \\ 
&  & 
\end{array}%
\end{equation*}%
Summing up these inequalities, and using the convexity of $f$, we obtain,
after some simple algebra, 
\begin{equation*}
\begin{array}{rcl}
f(z)-f(\hat{x}) & \geq & \langle z-\hat{x},\hat{x^{\ast }}\rangle -\epsilon ,%
\end{array}%
\end{equation*}%
and hence $\hat{x^{\ast }}\in \breve{\partial}f(\epsilon ,\tilde{x})$, as
wanted.
\end{proof}


\begin{Rem}
\label{R1}If $\mathop{\rm gph\,}(T)$ is nonempty, the family $\mathbb{E}(T)$
is nonempty and its biggest enlargement is $T^{BE}$, defined in \eqref{teps}%
. Using this fact, one can easily prove that for every $E\in \mathbb{E}(T)$
and $x\in X,$\ one has $E(0,x)=T(x).$\ Moreover, from Lemma \ref{TF} and the
definitions, it follows that the enlargement $\breve{\partial}f\in \mathbb{E}%
(\partial f)$ (see also \cite{BS02}).
\end{Rem}

\subsection{Convex representations of $T$}

As a consequence of \eqref{eq:9} and \eqref{eq:10}, the function $f^{FY}$ is
an example of a convex function that completely characterizes the graph of
the operator $\partial f$, as well as the graph of the Brøndsted-Rockafellar
enlargement. For an arbitrary maximally monotone operator $T$, Fitzpatrick
defined in \cite[Definition 3.1]{F} an ingenious proper convex $(\Vert \cdot
\Vert \times w)$-lower semicontinuous function, here denoted by $\mathcal{F}_{T}$, that
has the same properties: {%
\begin{equation}
\mathcal{F}_{T}(x,x^{\ast }):=\sup \{\langle y,x^{\ast }\rangle +\langle
x-y,y^{\ast }\rangle :(y,y^{\ast })\in \mathop{\rm gph\,}(T)\}.  \label{eq:2}
\end{equation}%
It satisfies:}

\begin{itemize}
\item $\mathcal{F}_{T}(x,x^{\ast })\geq \langle x,x^{\ast }\rangle $.

\item In analogy to \eqref{eq:9}, we have that (see \cite{F}): 
\begin{equation*}
\mathop{\rm gph\,}(T):=\{(x,x^{\ast })\in X\times X^{\ast }:\;\mathcal{F}%
_{T}(x,x^{\ast })=\mathcal{F}_{T}^*(x^{\ast },x)=\langle x,x^{\ast }\rangle
\}.
\end{equation*}

\item In analogy to \eqref{eq:10}, we have that (see \cite{BS02}):%
\begin{equation*}
x^{\ast }\in T^{\mathrm{BE}}(\epsilon ,x)\hbox{ if and only if }\mathcal{F}%
_{T}(x,x^{\ast })\leq \langle x,x^{\ast }\rangle +\epsilon .
\end{equation*}%
Therefore $\mathcal{F}_{T}$ completely characterizes the graph of the
operator $T$, as well as the graph of its enlargement $T^{\mathrm{BE}}$.
When $T=\partial f$, we can relate $\mathcal{F}_{T}$ and $f^{FY}$ as
follows. 
\begin{equation*}
\forall (x,x^{\ast })\in X\times X^{\ast },\quad \langle x,x^{\ast }\rangle
\leq {\mathcal{F}}_{\partial f}(x,x^{\ast })\leq f(x)+f^{\ast }(x^{\ast
})=f^{FY}(x,x^{\ast }).
\end{equation*}
\end{itemize}

\begin{Rem}
\label{alicante}Note that the Fitzpatrick function associated to a
subdifferential operator could be different from the Fenchel-Young function.
Indeed, if $X$ is a Hilbert space and $f:X\longrightarrow \mathbb{R}$ is
given by $f\left( x\right) :=\frac{1}{2}\left\Vert x\right\Vert ^{2},$ then $%
f^{FY}\left( x,y\right) :=\frac{1}{2}\left( \left\Vert x\right\Vert
^{2}+\left\Vert y\right\Vert ^{2}\right) $ and $\mathcal{F}_{\partial
f}\left( x,y\right) =\frac{1}{4}\left\Vert x+y\right\Vert ^{2}$.
\end{Rem}

The Fitzpatrick function was unnoticed for several years until it was
rediscovered by Martínez-Legaz and Théra \cite{ML01}. However, we recently
discovered, by reading a paper by Fl{å}m \cite{flam}, that this function had
already been used by Krylov \cite{krylov} before Fitzpatrick. According to
the fact that it bridges monotone operators to convex functions, it has been
the subject of an intense research with applications in different areas such
as the variational representation of (nonlinear) evolutionary PDEs, and the
development of variational techniques for the analysis of their structural
stability; see e.g., \cite%
{RocheRossiStefanelli2014,GHOUS08,Visintin20010,Visintin20013}; more
surprisingly, Fl{å}m \cite{flam} gave an economic interpretation of the
Fitzpatrick function.

Moreover, in \cite{F} Fitzpatrick also defined a family of convex functions
associated to $T$. We recall this definition next.

\begin{Def}
\label{def:h} Let $T:X\rightrightarrows X^{\ast }$ be maximally monotone.
Define $\mathcal{H}(T)$ as the family of lower semicontinuous \ convex functions $h:X\times
X^{\ast }\longrightarrow \mathbb{R\cup }\left\{ +\infty \right\} $ such that 
\begin{eqnarray}
&&h(x,x^{\ast })\geq \langle x^{\ast },x\rangle ,\forall x\in X,x^{\ast }\in
X^{\ast },  \label{eq:defh1} \\
&&x^{\ast }\in T(x)\Rightarrow h(x,x^{\ast })=\langle x^{\ast },x\rangle .
\label{eq:defh2}
\end{eqnarray}
\end{Def}

The family $\mathcal{H}(T)$ was studied in \cite{F} in connection with the
operator $T$ itself. It was proved in \cite{F} that the smallest element of
this family is precisely $\mathcal{F}_{T}$.

Clearly, relations (\ref{eq:defh1}) and (\ref{eq:defh2}) imply that one can
express a monotone relation as a minimization problem: setting $\Theta
(x,x^{\ast }):={\mathcal{F}}_{T}(x,x^{\ast })-\langle x,x^{\ast }\rangle ,$
we have that 
\begin{equation*}
x^{\ast }\in T(x)\;\iff \;\Theta (x,x^{\ast })=\inf_{(y,y^{\ast })\in
X\times X^{\ast }}\Theta (y,y^{\ast })=0.
\end{equation*}%
Moreover, it can be observed that, for a prescribed $x^{\ast }$ in the range
ot $T$, i.e. a point $x^*\in T(x)$ for some $x$, one can solve the inclusion 
$x^{\ast }\in T(x)$ just by minimizing the functional $\Theta (\cdot
,x^{\ast })$.

In the paper \cite{Visintin2014}, Visintin presents an interesting
application of Fitzpatrick functions to the Calculus of Variations. As
pointed out above, one can express a monotone relation as a minimization
problem in which the minimum value is prescribed as zero. In \cite%
{Visintin2014} it is shown that, by generalizing the Fitzpatrick approach,
one can express a monotone relation as a minimization problem, without the
need of prescribing the minimum value as zero. This is convenient in many
practical problems in which the minimum value is not known, including
problems from the Calculus of Variations.

Given a maximally monotone operator, \cite[Corollary 3.7]{BS02} shows that
the converse of \eqref{eq:defh2} also holds. Namely, for all $h\in \mathcal{H%
}(T)$ one has 
\begin{equation*}
h(x,x^{\ast })=\langle x,x^{\ast }\rangle \;\iff \;(x,x^{\ast })\in %
\mathop{\rm gph\,}(T).
\end{equation*}%
The use of Fitzpatrick functions has led to considerable simplifications in
the proofs of some classical properties of maximally monotone operators;
see, for instance, the work by Burachik and Svaiter \cite{BS02}, Simons and Z%
{\u{a}}linescu \cite{SZ}, Penot and Z{\u{a}}linescu \cite{PZ}, Bo{\c{t}} et
al. \cite{bot07}, Simons \cite{simons08}, and Marques Alves and Svaiter \cite%
{marques}. It was proved by Burachik and Svaiter \cite{BS02} that the family 
$\mathcal{H}(T)$ is in a one-to-one relationship with the family $\mathbb{E}%
(T)$ of enlargements of $T$, introduced and studied by Svaiter in \cite{SV}.
More connections between $\mathbb{E}(T)$ and $\mathcal{H}(T)$ were studied
in \cite{BS99,BS02,BS06}. The correspondence from $T$ to $\mathcal{H}(T)$
associates to a given maximally monotone operator, functions defined in $%
X\times X^{\ast }$. In the paper \cite{F}, Fitzpatrick also defined a
correspondence which goes in the opposite direction. Namely, given a proper
convex function $h:X\times X^{\ast }\rightarrow \mathbb{R}\cup \{+\infty \}$%
, Fitzpatrick defined the operator $T_{h}:X\rightrightarrows X^{\ast },$
given by 
\begin{equation}
T_{h}(x):=\{x^{\ast }:(x^{\ast },x)\in \partial h(x,x^{\ast })\}  \label{Tg}
\end{equation}

\begin{Rem}
\label{R0}Let $f:X\times X^{\ast }\longrightarrow \mathbb{R\cup }\left\{
+\infty \right\} $ be convex and lower semicontinuous, and consider again $%
f^{FY}(x,x^{\ast })=f(x)+f^{\ast }(x^{\ast })$. Then Example 2.3 in \cite{F}
proves that $T_{f^{FY}}=\partial f$. We extend this result in Theorem \ref%
{Theo1}(ii). Namely, we will extend this equality between two maximally
monotone operators to an equality between two enlargements of $T=\partial f$.
\end{Rem}

The following theorem summarizes those results in \cite{F} which will be
relevant to our study.

\begin{Thm}
\cite{F} \label{Theo0} Let $T:X\rightrightarrows X^{\ast }$ be monotone and $%
f:X\times X^{\ast }\longrightarrow \mathbb{R\cup }\left\{ +\infty \right\} $
be convex. Let $T_{\mathcal{F}_{T}}$ be defined as in \eqref{Tg} for $h:=%
\mathcal{F}_{T}$. The following facts hold.

\begin{itemize}
\item[(a)] For any $x\in X$ one has $T(x)\subset T_{\mathcal{F}_{T}}(x).$ If
T is maximally monotone then $T=T_{\mathcal{F}_{T}}$;

\item[(b)] If $T$ is maximally monotone, then $\mathcal{F}_{T}\in \mathcal{H}%
(T)$. Moreover, $\mathcal{F}_{T}$ is the smallest convex function in $%
\mathcal{H}(T)$;

\item[(c)] The operator $T_{f}$ is monotone.
\end{itemize}
\end{Thm}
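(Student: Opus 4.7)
The plan is to dispatch (c) with a short monotonicity computation, to handle (a) via direct manipulation of the subdifferential inequality for $\mathcal{F}_{T}$ combined with the substitution trick in the supremum \eqref{eq:2}, and to establish (b) by an affine-combination argument, which I expect to be the main obstacle.

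For (c), if $x^{\ast}\in T_{f}(x)$ and $y^{\ast}\in T_{f}(y)$, then \eqref{Tg} places $(x^{\ast},x)\in\partial f(x,x^{\ast})$ and $(y^{\ast},y)\in\partial f(y,y^{\ast})$. Classical monotonicity of the convex subdifferential $\partial f$ on $X\times X^{\ast}$, with the stated pairing, yields
\[
0\leq\langle(x,x^{\ast})-(y,y^{\ast}),\,(x^{\ast},x)-(y^{\ast},y)\rangle=2\langle x-y,x^{\ast}-y^{\ast}\rangle,
\]
and hence $T_{f}$ is monotone.

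For (a), I would build everything on the identity
\[
\langle y,x^{\ast}\rangle+\langle x,y^{\ast}\rangle-\langle y,y^{\ast}\rangle=\langle x,x^{\ast}\rangle-\langle x-y,x^{\ast}-y^{\ast}\rangle,
\]
which rewrites the bracket inside the supremum \eqref{eq:2}. When $(x,x^{\ast})\in\gph(T)$, monotonicity makes the right-hand side $\leq\langle x,x^{\ast}\rangle$ for every $(y,y^{\ast})\in\gph(T)$ and equal to $\langle x,x^{\ast}\rangle$ at $(y,y^{\ast})=(x,x^{\ast})$, so $\mathcal{F}_{T}(x,x^{\ast})=\langle x,x^{\ast}\rangle$. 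The subgradient condition $(x^{\ast},x)\in\partial\mathcal{F}_{T}(x,x^{\ast})$ then reduces to $\mathcal{F}_{T}(y,y^{\ast})\geq\langle y,x^{\ast}\rangle+\langle x,y^{\ast}\rangle-\langle x,x^{\ast}\rangle$, which is exactly the lower bound obtained by evaluating the supremum in \eqref{eq:2} at the particular witness $(x,x^{\ast})\in\gph(T)$; this proves $T(x)\subset T_{\mathcal{F}_{T}}(x)$. For the reverse inclusion under maximal monotonicity, I take $(x^{\ast},x)\in\partial\mathcal{F}_{T}(x,x^{\ast})$, test the subgradient inequality at an arbitrary $(y,y^{\ast})\in\gph(T)$, where $\mathcal{F}_{T}(y,y^{\ast})=\langle y,y^{\ast}\rangle$, and invoke the Fitzpatrick inequality $\mathcal{F}_{T}(x,x^{\ast})\geq\langle x,x^{\ast}\rangle$ (proved in (b) below) to obtain $\langle y-x,y^{\ast}-x^{\ast}\rangle\geq 0$ for every $(y,y^{\ast})\in\gph(T)$. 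Maximality then forces $(x,x^{\ast})\in\gph(T)$.

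For (b), convexity and lower semicontinuity of $\mathcal{F}_{T}$ are automatic as a pointwise supremum of continuous affine functions. The inequality $\mathcal{F}_{T}\geq\langle\cdot,\cdot\rangle$ and the equality on $\gph(T)$ are read off the identity used for (a): on the graph, monotonicity supplies the matching upper bound, while off the graph maximal monotonicity furnishes some $(y,y^{\ast})\in\gph(T)$ with $\langle x-y,x^{\ast}-y^{\ast}\rangle<0$, yielding a strict witness. The genuinely delicate point is the minimality of $\mathcal{F}_{T}$ in $\mathcal{H}(T)$. Given $h\in\mathcal{H}(T)$ and $(y,y^{\ast})\in\gph(T)$ so that $h(y,y^{\ast})=\langle y,y^{\ast}\rangle$, I would combine the chordal estimate
\[
h(\lambda x+(1-\lambda)y,\lambda x^{\ast}+(1-\lambda)y^{\ast})\leq\lambda h(x,x^{\ast})+(1-\lambda)\langle y,y^{\ast}\rangle
\]
with the lower bound $h\geq\langle\cdot,\cdot\rangle$ applied at the same convex combination. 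Expanding $\langle\lambda x+(1-\lambda)y,\lambda x^{\ast}+(1-\lambda)y^{\ast}\rangle$, dividing through by $\lambda>0$, and sending $\lambda\to 0^{+}$, the quadratic-in-$\lambda$ cross term vanishes and leaves $h(x,x^{\ast})\geq\langle y,x^{\ast}\rangle+\langle x,y^{\ast}\rangle-\langle y,y^{\ast}\rangle$; taking the supremum over $\gph(T)$ yields $h\geq\mathcal{F}_{T}$.
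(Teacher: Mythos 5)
Your proof is correct; the paper itself gives no argument for this theorem (it is quoted from Fitzpatrick's paper \cite{F}), and your three arguments — the factor-of-two monotonicity computation for (c), the identity $\langle y,x^{\ast}\rangle+\langle x-y,y^{\ast}\rangle=\langle x,x^{\ast}\rangle-\langle x-y,x^{\ast}-y^{\ast}\rangle$ driving (a), and the chordal estimate with $\lambda\to 0^{+}$ for minimality in (b) — are essentially Fitzpatrick's original proofs. No gaps.
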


We end this subsection by extending Theorem \ref{Theo0}(a) to every $h\in 
\mathcal{H}(T)$; the result is an easy consequence of \cite[Theorem 2.4 and
Proposition 2.2]{F}.

\begin{Pro}
\label{pro2:Th} Let $T$ be maximally monotone, and fix $h\in \mathcal{H}(T)$%
. Then $T=T_{h}$.
\end{Pro}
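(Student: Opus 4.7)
The plan is to prove $T\subseteq T_h$ directly and then conclude equality using the maximality of $T$ together with Theorem \ref{Theo0}(c), which says that $T_h$ is monotone.

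For the inclusion $T\subseteq T_h$, I would fix $(x,x^{\ast})\in\gph(T)$ and show that $(x^{\ast},x)\in\partial h(x,x^{\ast})$. Using the duality pairing on $X\times X^{\ast}$ stated in Section~2 (namely $\langle(x,x^{\ast}),(y^{\ast},y)\rangle=\langle x,y^{\ast}\rangle+\langle y,x^{\ast}\rangle$), the subgradient inequality to establish reads
\begin{equation*}
h(y,y^{\ast})-h(x,x^{\ast})\geq \langle y-x,x^{\ast}\rangle+\langle x,y^{\ast}-x^{\ast}\rangle,\quad\forall(y,y^{\ast})\in X\times X^{\ast}.
\end{equation*}
Condition \eqref{eq:defh2} yields $h(x,x^{\ast})=\langle x,x^{\ast}\rangle$, so after rearrangement the desired inequality is equivalent to
\begin{equation*}
h(y,y^{\ast})\geq \langle y,x^{\ast}\rangle+\langle x,y^{\ast}\rangle-\langle x,x^{\ast}\rangle.
\end{equation*}
Here I would exploit the minimality of the Fitzpatrick function in $\mathcal{H}(T)$ (Theorem \ref{Theo0}(b)): since $h\geq \mathcal{F}_T$ everywhere, it suffices to bound $\mathcal{F}_T(y,y^{\ast})$ from below. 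But the very definition \eqref{eq:2} of $\mathcal{F}_T$, evaluated at the admissible point $(x,x^{\ast})\in\gph(T)$ in the supremum, gives exactly
\begin{equation*}
\mathcal{F}_T(y,y^{\ast})\geq \langle x,y^{\ast}\rangle+\langle y-x,x^{\ast}\rangle=\langle y,x^{\ast}\rangle+\langle x,y^{\ast}\rangle-\langle x,x^{\ast}\rangle,
\end{equation*}
which is what is needed. This proves $(x^{\ast},x)\in\partial h(x,x^{\ast})$, hence $x^{\ast}\in T_h(x)$.

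For the reverse inclusion, I would invoke Theorem \ref{Theo0}(c) to get that $T_h$ is monotone, combined with $T\subseteq T_h$ just proved and the maximal monotonicity of $T$: any monotone extension of a maximally monotone operator must coincide with it. The main conceptual point, and the only nontrivial step, is recognizing that the minimality of $\mathcal{F}_T$ in $\mathcal{H}(T)$ is precisely the tool that lets the subgradient estimate for an arbitrary $h\in\mathcal{H}(T)$ be reduced to a single-point evaluation in the sup defining $\mathcal{F}_T$; everything else is bookkeeping with the dual pairing on $X\times X^{\ast}$.
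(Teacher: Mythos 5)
Your proof is correct, and its overall skeleton coincides with the paper's: establish $\gph T\subset\gph T_{h}$, invoke the monotonicity of $T_{h}$ (Theorem \ref{Theo0}(c)), and conclude by maximality. The difference lies in how the inclusion is obtained. The paper disposes of it in one line by citing Fitzpatrick's Theorem 2.4 from \cite{F}, which is exactly the statement that $h\geq\pi:=\langle\cdot,\cdot\rangle$ together with $h(x,x^{\ast})=\langle x,x^{\ast}\rangle$ forces $(x^{\ast},x)\in\partial h(x,x^{\ast})$; its proof is a small convexity argument along the segment joining $(x,x^{\ast})$ to $(y,y^{\ast})$ that kills the indefinite term $\langle y-x,y^{\ast}-x^{\ast}\rangle$. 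You instead derive the subgradient inequality from the minimality of $\mathcal{F}_{T}$ in $\mathcal{H}(T)$ (Theorem \ref{Theo0}(b)) plus a single-point evaluation in the supremum \eqref{eq:2}, which is a clean and entirely self-contained computation given the paper's stated toolbox. Two remarks. First, your route genuinely uses maximal monotonicity already in the inclusion step (minimality of $\mathcal{F}_{T}$ is only asserted for maximal $T$), whereas Fitzpatrick's Theorem 2.4 needs no monotonicity at all and the paper's Theorem \ref{Theo0}(a) records the inclusion $T\subset T_{\mathcal{F}_{T}}$ for merely monotone $T$; this costs you nothing here since $T$ is assumed maximal. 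Second, in Fitzpatrick's own development the minimality of $\mathcal{F}_{T}$ is itself deduced from the subgradient property you are trying to prove (one writes $h(x,x^{\ast})\geq\langle y,x^{\ast}\rangle+\langle x-y,y^{\ast}\rangle$ for $(y,y^{\ast})\in\gph T$ precisely by using $(y^{\ast},y)\in\partial h(y,y^{\ast})$ and then takes the supremum), so your argument in effect runs that implication backwards. Within this paper that is perfectly legitimate, since Theorem \ref{Theo0}(b) is quoted as a black box, but it is worth being aware that the two facts are essentially equivalent rather than one being strictly deeper than the other.
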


\begin{proof}
Since $h\in \mathcal{H}(T),$ we have $\mathop{\rm gph\,}T\subset \left\{
(x,x^{\ast })\in X\times X^{\ast }:h(x,x^{\ast })=\langle x^{\ast },x\rangle
\right\} ;$ hence, by \cite[Theorem 2.4]{F}, the inclusion $%
\mathop{\rm
gph\,}T\subset \mathop{\rm gph\,}T_{h}$ holds. On the other hand, by \cite[%
Proposition 2.2]{F}, $T_{h}$ is monotone. Using this fact together with the
preceding inclusion and the maximal monotonicity of $T,$ we get $T=T_{h}$.
\end{proof}

%

\subsection{Autoconjugate convex representations of $T$}

Every element $h\in \mathcal{H}(T)$ is defined on $X\times X^{\ast },$ while 
$h^{\ast }$ is defined on $X^{\ast }\times X$. Recall that the dual of $%
X\times X^{\ast }$ can be identified with $X^{\ast }\times X$ through the
product 
\begin{equation*}
\langle (x,x^{\ast }),(y^{\ast },y)\rangle =\langle x,x^{\ast }\rangle
+\langle y,x^{\ast }\rangle .
\end{equation*}%
In order to work with functions defined on $X\times X^{\ast }$, we will use
the permutation function $i:X\times X^{\ast }\longrightarrow X^{\ast }\times
X$ defined by $i\left( x,x^{\ast }\right) :=\left( x^{\ast },x\right) .$ The
composition $h^{\ast }\circ i$ will thus be defined on $X\times X^{\ast }.$
Notice that the mapping $h\longmapsto h^{\ast }\circ i$ is precisely the
operator $\mathcal{J}:\mathcal{H}(T)\longrightarrow \mathcal{H}(T)$ defined
in \cite{BS02}, which, as shown in \cite[Remark 5.4]{BS02}, is an involution%
\textrm{.}

\begin{Def}
\label{def:conv-repr-t} Let $T:X\rightrightarrows X^{\ast }$ be maximally
monotone. Every $h\in \mathcal{H}(T)$ is called a \emph{convex
representation of }$T$. When $h\in \mathcal{H}(T)$ satisfies 
\begin{equation*}
h^{\ast }\circ i=h,
\end{equation*}%
we say that $h$ is an \emph{autoconjugate convex representation of }$T$.
\end{Def}

The function $f^{FY}$, which characterizes the epsilon-subdifferential
enlargement of $\partial f$, is an autoconjugate convex representation of $%
\partial f$, as can be easily checked. Hence, it is natural to look for
autoconjugate convex representations when searching for an enlargement
structurally closer to the epsilon-subdifferential. This observation
generates a great interest in constructing autoconjugate convex
representations of an arbitrary operator $T$. Outside the subdifferential
case, the operator $T:=\partial f+S,$ where $S$ is a skew-adjoint linear
operator  $(S^*= - S)$, admits the autoconjugate convex representation given by $f\left(
x\right) +f^{\ast }\left( -S\left( x\right) +x^{\ast }\right)$ (see for instance Example 2.6 in \cite{BWY10}, and Ghoussoub \cite{ghoussoub06}). The
interest of having autoconjugate convex representations is also given by the
next theorem:

\begin{Thm}
An operator $T:X\rightrightarrows X^{\ast }$ is maximally monotone if and
only if it admits an autoconjugate convex representation.
\end{Thm}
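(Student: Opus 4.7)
The plan is to prove both directions of the equivalence. The sufficiency direction (``autoconjugate representation $\Rightarrow$ maximally monotone'') relies on Fenchel--Young and a coercivity argument, while the necessity direction (``maximally monotone $\Rightarrow$ autoconjugate representation exists'') is constructive. For sufficiency, assume $h$ is proper convex lsc with $h\ge\langle\cdot,\cdot\rangle$, equality on $\gph T$, and $h^*\circ i=h$. Monotonicity of $T$ is immediate: for $(x_i,x_i^*)\in\gph T$, the Fenchel--Young inequality for $h$ at $(x_1,x_1^*)$ with dual point $(x_2^*,x_2)$ gives $\langle x_1,x_2^*\rangle+\langle x_2,x_1^*\rangle\le h(x_1,x_1^*)+h^*(x_2^*,x_2)$, and the right-hand side collapses to $\langle x_1,x_1^*\rangle+\langle x_2,x_2^*\rangle$ by autoconjugacy and (\ref{eq:defh2}), yielding $\langle x_1-x_2,x_1^*-x_2^*\rangle\ge 0$. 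For maximality, given $(x_0,x_0^*)$ monotonically related to $\gph T$, I translate via
\begin{equation*}
\tilde h(u,u^*):=h(u+x_0,u^*+x_0^*)-\langle u,x_0^*\rangle-\langle u^*,x_0\rangle-\langle x_0,x_0^*\rangle,
\end{equation*}
and check by a direct conjugate computation that $\tilde h$ remains autoconjugate with $\tilde h\ge\langle\cdot,\cdot\rangle$. Adding the self-dual quadratic kernel $q(u,u^*):=\tfrac12\|u\|^2+\tfrac12\|u^*\|^2$ (self-dual thanks to reflexivity of $X$) makes $\tilde h+q$ coercive. Fenchel duality combined with autoconjugacy of both $\tilde h$ and $q$ yields $\inf(\tilde h+q)=-\inf(\tilde h+q)$, so the infimum is $0$, attained at some $(\bar u,\bar u^*)$ by weak lower semicontinuity in the reflexive setting. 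Reading the equality cases in Fenchel--Young and AM--GM at the minimizer forces $\tilde h(\bar u,\bar u^*)=\langle\bar u,\bar u^*\rangle=-\|\bar u\|^2$, and the monotonicity hypothesis then pushes $\bar u=\bar u^*=0$, giving $(x_0,x_0^*)\in\gph T$.

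For necessity, assume $T$ is maximally monotone. Theorem \ref{Theo0}(b) supplies $\mathcal F_T\in\mathcal H(T)$, but Remark \ref{alicante} shows this need not be autoconjugate in general. Since the involution $\mathcal J\colon h\mapsto h^*\circ i$ stabilizes $\mathcal H(T)$, the task reduces to producing a $\mathcal J$-fixed point in $\mathcal H(T)$. I would invoke the Bauschke--Wang proximal average of $\mathcal F_T$ and $\mathcal J\mathcal F_T$ built against the self-dual quadratic kernel $\tfrac12\|\cdot\|^2$ on $X\times X^*$ (again relying on reflexivity). Verification then has two steps: (i) $h\in\mathcal H(T)$, which follows because both operands agree with $\langle\cdot,\cdot\rangle$ on $\gph T$ and dominate it elsewhere, and any proximal convex combination inherits these properties; and (ii) $h^*\circ i=h$, which is where the self-duality of the kernel and the $\mathcal J$-conjugate choice of operands together become essential.

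The main obstacle is step (ii). Computing the Fenchel conjugate of the inf-convolution implicit in the proximal average only yields the inequality in (\ref{eq:12}) in general; upgrading to equality requires an Attouch--Br\'ezis-type qualification, which is available here because the quadratic kernel is everywhere finite and continuous. Once conjugation and inf-convolution commute, the self-duality $q^*\circ i=q$ together with $\mathcal J(\mathcal J\mathcal F_T)=\mathcal F_T$ force the autoconjugacy $h^*\circ i=h$ to drop out of the symmetric construction.
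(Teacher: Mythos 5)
Your route is genuinely different from the paper's: the paper disposes of both implications by citation (Svaiter \cite{S03} and Bauschke--Wang for the ``only if'' part, Burachik--Svaiter \cite{regina-benar} for the ``if'' part), whereas you reconstruct the underlying arguments --- the translated-and-regularized Fenchel duality scheme for sufficiency, and the proximal average of $\mathcal{F}_{T}$ and $\mathcal{F}_{T}^{\ast }\circ i$ against the self-dual quadratic kernel for necessity. Both are the standard proofs behind the cited results, and the key technical points are correctly identified: the identity $\inf (\tilde{h}+q)=-\min (\tilde{h}+q)$ from autoconjugacy plus self-duality of $q$, attainment by coercivity and reflexivity, and the Attouch--Br\'{e}zis qualification (automatic since $q$ is everywhere finite and continuous) needed to turn the inequality in (\ref{eq:12}) into an equality when conjugating the proximal average. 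The verification that the proximal average lies in $\mathcal{H}(T)$ is stated too loosely (``any proximal convex combination inherits these properties'' is not literally true, since the proximal average is not a pointwise convex combination); the clean order is to establish autoconjugacy first, deduce $h\geq \langle \cdot ,\cdot \rangle $ from $2h=h+h^{\ast }\circ i\geq 2\langle \cdot ,\cdot \rangle $, and combine this with the upper bound $h\leq \frac{1}{2}\left( \mathcal{F}_{T}+\mathcal{F}_{T}^{\ast }\circ i\right) $ to get equality on $\mathop{\rm gph\,}(T)$.

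There is, however, one genuine gap in your sufficiency argument, at the step ``the monotonicity hypothesis then pushes $\bar{u}=\bar{u}^{\ast }=0$.'' The minimization produces a point $(\bar{u}+x_{0},\bar{u}^{\ast }+x_{0}^{\ast })$ in the \emph{equality set} $M_{h}:=\{(x,x^{\ast }):h(x,x^{\ast })=\langle x,x^{\ast }\rangle \}$, but the hypothesis that $(x_{0},x_{0}^{\ast })$ is monotonically related to $\mathop{\rm gph\,}(T)$ can only be applied if this point lies in $\mathop{\rm gph\,}(T)$ itself. Conditions (\ref{eq:defh1})--(\ref{eq:defh2}) give only $\mathop{\rm gph\,}(T)\subset M_{h}$, so your argument proves that $M_{h}$ is maximally monotone, not that $T$ is. Under that literal reading the implication is in fact false: for $\mathop{\rm gph\,}(T)=\{(0,0)\}$ in $\mathbb{R}\times \mathbb{R}$, the autoconjugate function $h(x,x^{\ast })=\frac{1}{2}x^{2}+\frac{1}{2}(x^{\ast })^{2}$ satisfies both conditions while $T$ is not maximal. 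The statement requires the stronger reading in which a representation \emph{characterizes} the graph, $\mathop{\rm gph\,}(T)=M_{h}$ (this is the content of \cite[Theorem 3.1]{regina-benar}, and it is consistent with \cite[Corollary 3.7]{BS02} quoted after Definition \ref{def:h}); you should make that hypothesis explicit and invoke it at the final step, after which your equality-case analysis closes correctly.
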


\begin{proof}
  Svaiter proved in \cite[Proposition 2.2 and Theorem 2.4]{S03} that
  for every maximally monotone operator $T$, there exists $h\in {\cal
    H}(T)$ such that $h$ is auto conjugate. See also Bauschke and Wang \cite[ Theorem
  5.7]{bauschke-wang}. The converse
  follows from a result by Burachik and Svaiter \cite[Theorem
  3.1]{regina-benar}. 
\end{proof}

\begin{Rem}
The ``only if'' part of Theorem 2.2 proved in \cite[Proposition 2.2 and Theorem 2.4]{S03}  is valid in any real Banach
space. The ``if'' part proved in \cite[Theorem
  3.1]{regina-benar} assumes $X$ is reflexive.
\end{Rem}

\begin{Rem}
The papers \cite{penot03,penot,S03} present non-constructive examples of
autoconjugate convex representations of $T$. Constructive examples of
autoconjugate convex representations of $T$ can be found in
\cite{BWY10,bauschke-wang,PZ}.
The one found in \cite{PZ} requires a mild constraint qualification, namely,
that the affine hull of the domain of $T$ is closed. The other ones do not
require any constraint qualification. We will show later other constructive
examples of autoconjugate representations of $T$.
\end{Rem}

We introduce now another map, defined on the set%
\begin{equation*}
\mathcal{H}:=\bigcup \left\{ \mathcal{H}(T):T:X\rightrightarrows X^{\ast }%
\text{ is maximally monotone}\right\} ,
\end{equation*}%
which will have an important role in the definition of our enlargements and
in obtaining autoconjugate convex representations of $T$.

\begin{Rem}
\label{FY auto}For every lower semicontinuous proper convex function $f:X\longrightarrow 
\mathbb{R\cup }\left\{ +\infty \right\} ,$ one has $f^{FY}\in \mathcal{H}%
\left( \partial f\right) \subset \mathcal{H}.$ Furthermore, it is easy to
see that $f^{FY}$ is an autoconjugate convex representation of $\partial f.$
\end{Rem}

\begin{Def}
\label{def:A}The map $\mathcal{A}$$:\mathcal{H}\rightarrow \mathcal{H}$ is
defined by%
\begin{equation}
{\mathcal{A}}h:=\frac{1}{2}\left( h+h^{\ast }\circ i\right) .  \label{eq:61}
\end{equation}
\end{Def}

\begin{Rem}
\label{rem:BS}It follows from \cite[Theorem 5.1 and Proposition 5.3]{BS02}
that $\mathcal{A}h$ and $h^{\ast }\circ i$ belong to $\mathcal{H}(T),$ for
every $h\in \mathcal{H}(T);$ therefore, the map $\mathcal{A}$ is well
defined.
\end{Rem}

For the next theorem we need to define the following sets: 
\begin{equation}
\begin{array}{l}
\\ 
\mathcal{H}_{\ast \leq }:=\{h\in \mathcal{H}:h^{\ast }\circ i\leq h\},\quad {%
\mathcal{H}}_{\ast =}:=\{h\in \mathcal{H}:{h}^{\ast }\circ i=h\}, \\ 
\\ 
\quad \quad \quad \quad {\ \mathcal{H}}_{\ast \geq }:=\{h\in \mathcal{H}%
:h^{\ast }\circ i\geq h\}.%
\end{array}
\label{eq:13}
\end{equation}

\begin{Thm}
\label{theo:JE}Consider the operator $\mathcal{A}$ given in Definition \ref%
{def:A}, and the sets defined in \eqref{eq:13}. The following statements
hold.

\begin{itemize}
\item[(i)] The operator $\mathcal{A}$ maps $\mathcal{H}$ into $\mathcal{H}%
_{\ast \leq }$. The operator $h\longmapsto \left( \mathcal{A}h\right) ^{\ast
}\circ i$ maps $\mathcal{H}$ into $\mathcal{H}_{\ast \geq }$;

\item[(ii)] The set of fixed points of $\mathcal{A}$ is $\left\{ h\in 
\mathcal{H}_{\ast \leq }:h^{\ast }\circ i=h\text{ on }\mathrm{dom}\left(
h\right) \right\};$

\item[(iii)] Let $h\in \mathcal{H}.$ For every $n\geq 1,$ one has $\mathrm{%
dom}\left( \mathcal{A}^{n}{h}\right) =\mathrm{dom}\left( {h}\right) \cap 
\mathrm{dom}\left( h^{\ast }\circ i\right);$

\item[(iv)] Let $h\in \mathcal{H}.$ The sequences $\{{\mathcal{A}}^{n}{h}%
\}_{n\geq 1}\subset \mathcal{H}_{\ast \leq }$ and $\{\left( \mathcal{A}%
^{n}h\right) ^{\ast }\circ i\}_{n\geq 1}\subset \mathcal{H}_{\ast \geq }$
are pointwise non-increasing and non-decreasing, respectively. The pointwise
limit $\mathcal{A}$$^{\infty }{h}$ of the first one satisfies $\mathrm{dom}%
\left( \mathcal{A}^{\infty }{h}\right) =\mathrm{dom}\left( {h}\right) \cap 
\mathrm{dom}\left( h^{\ast }\circ i\right) .$ If $\mathcal{A}$$^{\infty }{h}$
is lower semicontinuous, it is a fixed point of $\mathcal{A};$

\item[(v)] Let $h\in \mathcal{H}.$ For every $n\geq 1$, one has%
\begin{equation}
\left( \mathcal{A}^{n}h\right) ^{\ast }\circ i\leq \mathcal{A}^{\infty }{h}%
\leq \mathcal{A}^{n}{h};  \label{eq:16}
\end{equation}

\item[(vi)] Let $h\in \mathcal{H}.$ The sequence $\{\left( \mathcal{A}%
^{n}h\right) ^{\ast }\circ i\}_{n\geq 1}$ converges pointwise to $\mathcal{A}
$$^{\infty }{h}$ on $\mathrm{dom}\left( {h}\right) \cap \mathrm{dom}\left(
h^{\ast }\circ i\right) ;$

\item[(vii)] Let $T:X\rightrightarrows X^{\ast }$ be maximally monotone$.$
If $h\in \mathcal{H}(T)$ and $\mathcal{A}$$^{\infty }{h}$ is lower semicontinuous,  then $%
\mathcal{A}$$^{\infty }{h}\in \mathcal{H}(T).$
\end{itemize}
\end{Thm}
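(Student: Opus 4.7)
\textbf{Proof proposal for Theorem \ref{theo:JE}(vii).}

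The plan is to verify, one by one, the four defining conditions of $\mathcal{H}(T)$ (convexity, lower semicontinuity, the inequality (\ref{eq:defh1}), and the equality (\ref{eq:defh2})) for the pointwise limit $\mathcal{A}^\infty h$. The key preliminary observation is that each iterate $\mathcal{A}^n h$ already lies in $\mathcal{H}(T)$. Indeed, by Remark \ref{rem:BS}, $\mathcal{A}$ maps $\mathcal{H}(T)$ into itself, so starting from $h\in\mathcal{H}(T)$ an easy induction gives $\mathcal{A}^n h\in\mathcal{H}(T)$ for every $n\geq 1$.

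First I would handle the easy structural conditions. Lower semicontinuity is exactly the hypothesis of the theorem. For convexity, by part (iv) the sequence $\{\mathcal{A}^n h\}_{n\geq 1}$ is pointwise non-increasing; since each $\mathcal{A}^n h$ is convex, its pointwise limit is convex too (the standard argument: the defining convexity inequality passes to the limit). For properness, note that for any $(x,x^*)\in\gph T$ the inequality (\ref{eq:defh1}) applied to $\mathcal{A}^n h\in\mathcal{H}(T)$ together with the equality (\ref{eq:defh2}) gives $\mathcal{A}^n h(x,x^*)=\langle x^*,x\rangle$ for every $n$, whence $\mathcal{A}^\infty h(x,x^*)=\langle x^*,x\rangle\in\mathbb{R}$; since $\gph T\neq\emptyset$ by maximal monotonicity, $\mathcal{A}^\infty h$ is proper.

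Next I would read off conditions (\ref{eq:defh1}) and (\ref{eq:defh2}) from the same observation. For (\ref{eq:defh1}), fix $(x,x^*)\in X\times X^*$. Because $\mathcal{A}^n h\in\mathcal{H}(T)$, we have $\mathcal{A}^n h(x,x^*)\geq\langle x^*,x\rangle$ for every $n$, and letting $n\to\infty$ yields $\mathcal{A}^\infty h(x,x^*)\geq\langle x^*,x\rangle$. For (\ref{eq:defh2}), fix $(x,x^*)\in\gph T$; as computed above, every term of the sequence equals $\langle x^*,x\rangle$, so the limit does too.

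Putting these facts together shows $\mathcal{A}^\infty h\in\mathcal{H}(T)$. There is no serious obstacle here: the entire argument is a verification that the three quantitative properties of $\mathcal{H}(T)$ survive the pointwise monotone limit established in part (iv), and the only hypothesis whose role is nontrivial is the lower semicontinuity assumption, which is imported from the statement. Note also that convexity and properness actually come for free (they need not be assumed), so if anything deserves extra care it is checking that the common domain $\dom h\cap\dom(h^*\circ i)$, computed in parts (iii)–(iv), is nonempty — which is exactly guaranteed by $\gph T\subseteq\dom h\cap\dom(h^*\circ i)$.
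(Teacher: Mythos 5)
Your proof is correct and follows essentially the same route as the paper: both arguments reduce to the observation that each iterate $\mathcal{A}^{n}h$ lies in $\mathcal{H}(T)$ (Remark \ref{rem:BS}) and that the inequality (\ref{eq:defh1}) and the equality (\ref{eq:defh2}) survive the monotone pointwise limit, the only difference being that the paper sandwiches $\mathcal{A}^{\infty}h$ between $\left(\mathcal{A}h\right)^{\ast}\circ i$ and $\mathcal{A}h$ using parts (i) and (v), whereas you pass to the limit through all iterates. Your extra verification of convexity and properness is sound (and harmless), though the paper already records these facts in the proof of part (iv).
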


\begin{proof}
(i) We need to show that $\left( \mathcal{A}h\right) ^{\ast }\circ i\leq $$%
\mathcal{A}$${h}$. Indeed, using the properties of the conjugation operator,
we can write%
\begin{eqnarray*}
\left( \left( \mathcal{A}h\right) ^{\ast }\circ i\right) (x,x^{\ast })
&=&\left( {\mathcal{A}{h}}\right) ^{\ast }(x^{\ast },x) \\
&=&(\frac{h+h^{\ast }\circ i}{2})^{\ast }(x^{\ast },x)=\frac{1}{2}(h+h^{\ast
}\circ i)^{\ast }(2\,x^{\ast },2\,x) \\
&\leq &\frac{1}{2}(h^{\ast }\oplus \left( h^{\ast }\circ i\right) ^{\ast
})(2\,x^{\ast },2\,x) \\
&\leq &\frac{1}{2}\left( h^{\ast }(x^{\ast },x)+\left( h^{\ast }\circ
i\right) ^{\ast }(x^{\ast },x)\right) \\
&=&\frac{1}{2}\left( \left( h^{\ast }\circ i\right) (x,x^{\ast })+{h}%
(x,x^{\ast })\right) =\mathcal{A}{h}(x,x^{\ast }),
\end{eqnarray*}%
where we have used \eqref{eq:12} in the first inequality, the definition of
infimal convolution in the second inequality, and the equality $h^{\ast \ast
}=h$ in the last step (recall that $h$ is lower semicontinuous, convex and proper). The
fact that the operator $h\longmapsto \left( \mathcal{A}h\right) ^{\ast
}\circ i$ maps $\mathcal{H}$ into $\mathcal{H}_{\ast \geq }$ follows from
(i) and the fact that {$\mathcal{A}$}${h}^{\ast \ast }=\mathcal{A}{h}\geq
\left( \mathcal{A}h\right) ^{\ast }\circ i$.

(ii) If the equality $h^{\ast }\circ i=h$ holds on $\mathrm{dom}\left( {h}%
\right) ,$ then we clearly have $\mathcal{A}$${h}=h$ at points where $h$ is
finite. At points where $h$ is infinite, $\mathcal{A}h$ must also be
infinite (because $h^{\ast }$ is proper), and hence $\mathcal{A}$${h}=h$
everywhere. Conversely, assume that $\mathcal{A}$${h}=h$. If $(x,x^{\ast
})\in \mathrm{dom}\left( {h}\right) $, then the equality $\mathcal{A}{h}=h$
yields $\left( h^{\ast }\circ i\right) (x,x^{\ast })<+\infty $ and hence $%
\left( h^{\ast }\circ i\right) (x,x^{\ast })=h(x,x^{\ast })$. This implies
that $h^{\ast }\circ i=h\text{ on }\mathrm{dom}\left( h\right) $.

(iii) We prove the claim by induction. It is clear from the definition that%
\newline
$\mathrm{dom}\left( \mathcal{A}{h}\right) =\mathrm{dom}\left( {h}\right)
\cap \mathrm{dom}\left( h^{\ast }\circ i\right) ,$ so the claim is true for $%
n=1$. Assume that $\mathrm{dom}\left( \mathcal{A}^{n}{h}\right) =\mathrm{dom}%
\left( {h}\right) \cap \mathrm{dom}\left( h^{\ast }\circ i\right) $. Using
(i) yields 
\begin{equation}
\left( {\mathcal{A}^{n}}h\right) ^{\ast }\circ i\leq \mathcal{A}^{n}{h},
\label{eq:14}
\end{equation}%
for every $n\geq 2$. This implies that 
\begin{equation*}
\mathrm{dom}\left( \mathcal{A}^{n}{h}\right) \subset \mathrm{dom}\left(
\left( {\mathcal{A}^{n}}h\right) ^{\ast }\circ i\right) .
\end{equation*}%
Using the definition of ${\mathcal{A}}$, the inclusion above, and the
induction hypothesis, we can write%
\begin{eqnarray*}
&&\mathrm{dom}\left( \mathcal{A}^{n+1}{h}\right) \\
&=&\mathrm{dom}\left( \mathcal{A}^{n}{h}\right) \cap \mathrm{dom}\left(
\left( {\mathcal{A}^{n}}h\right) ^{\ast }\circ i\right) \\
&=&\mathrm{dom}\left( \mathcal{A}^{n}{h}\right) =\mathrm{dom}\left( {h}%
\right) \cap \mathrm{dom}\left( h^{\ast }\circ i\right) ,
\end{eqnarray*}%
%
%
%
%
which proves the claim by induction.

(iv) By \eqref{eq:14} we can write 
\begin{equation}
\mathcal{A}^{n+1}{h}=\frac{1}{2}\left( \mathcal{A}^{n}{h+}\left( {\mathcal{A}%
^{n}}h\right) ^{\ast }\circ i\right) \leq \mathcal{A}^{n}{h},  \label{eq:15}
\end{equation}%
showing that the sequence $\{\mathcal{A}^{n}{h}\}$ is pointwise
non-increasing. By the order reversing property of the conjugation operator,
the sequence $\left\{ \left( {\mathcal{A}^{n}}h\right) ^{\ast }\circ
i\right\} $ is non-decreasing.

Denote by $D_{0}$ the set $\mathrm{dom}\left( {h}\right) \cap \mathrm{dom}%
\left( h^{\ast }\circ i\right) $. We claim that, for $\left( x,x^{\ast
}\right) \in D_{0},$ the two sequences $\left\{ \mathcal{A}^{n}{h}\left(
x,x^{\ast }\right) \right\} $ and $\left\{ \left( \left( {\mathcal{A}^{n}}%
h\right) ^{\ast }\circ i\right) \left( x,x^{\ast }\right) \right\} $ are
adjacent (that is, $\left\{ \mathcal{A}^{n}{h}\left( x,x^{\ast }\right)
\right\} $ is non-increasing, $\left\{ \left( \left( {\mathcal{A}^{n}}%
h\right) ^{\ast }\circ i\right) \left( x,x^{\ast }\right) \right\} $ is
non-decreasing, and $\lim_{n\rightarrow \infty }\left( \mathcal{A}^{n}{h}%
\left( x,x^{\ast }\right) -\left( \left( {\mathcal{A}^{n}}h\right) ^{\ast
}\circ i\right) \left( x,x^{\ast }\right) \right) =0$). We can write 
\begin{eqnarray*}
0 &\leq &(\mathcal{A}^{n+1}h-\left( \mathcal{A}^{n+1}h\right) ^{\ast }\circ
i)(x,x^{\ast })\leq (\mathcal{A}^{n+1}h-\left( \mathcal{A}^{n}h\right)
^{\ast }\circ i)(x,x^{\ast }) \\
&=&\frac{1}{2}(\mathcal{A}^{n}h-\left( \mathcal{A}^{n}h\right) ^{\ast }\circ
i)(x,x^{\ast })<+\infty ,
\end{eqnarray*}%
where we have used (i) and (iii) in the left-most inequality, (\ref{eq:15})
in the second one, the definition of $\mathcal{A}$ in the equality, and the
fact that $(x,x^{\ast })\in D_{0}$ together with (iii) in the last
inequality. Hence, we obtain%
\begin{equation*}
0\leq (\mathcal{A}^{n}{h}-\left( \mathcal{A}^{n}h\right) ^{\ast }\circ
i)(x,x^{\ast })\leq \frac{1}{2^{n-1}}(\mathcal{A}{h}-\left( \left( \mathcal{A%
}h\right) ^{\ast }\circ i)\right) (x,x^{\ast }),
\end{equation*}%
the second inequality following by induction from the above inequality $(%
\mathcal{A}^{n+1}h-\left( \mathcal{A}^{n+1}h\right) ^{\ast }\circ
i)(x,x^{\ast })\leq \frac{1}{2}(\mathcal{A}^{n}h-\left( \mathcal{A}%
^{n}h\right) ^{\ast }\circ i)(x,x^{\ast }),$ and the claim is established.
By (i), the sequence $\{\mathcal{A}^{n}{h}(x,x^{\ast })\}$ is bounded below
by the function $\pi :=\langle \cdot ,\cdot \rangle $. Therefore, for every
fixed $(x,x^{\ast })\in D_{0}$, the sequence $\{\mathcal{A}^{n}{h}(x,x^{\ast
})\}\subset \mathbb{R}$ is non-increasing and bounded below by $\langle
x,x^{\ast }\rangle \in \mathbb{R}$. The completeness axiom thus yields 
\begin{equation*}
\mathbb{R}\ni \lim_{n\rightarrow \infty }\left( \mathcal{A}^{n}{h}\right)
(x,x^{\ast })=\inf_{n}\left( \mathcal{A}^{n}{h}\right) (x,x^{\ast })\geq
\langle x,x^{\ast }\rangle .
\end{equation*}%
Note that, if $(x,x^{\ast })\not\in D_{0}$, then $\left( \mathcal{A}^{n}{h}%
\right) (x,x^{\ast })=+\infty $ for all $n$, so in this case we have $\left( 
\mathcal{A}^{\infty }h\right) (x,x^{\ast })=+\infty $. From its definition,
we have that $\mathcal{A}^{\infty }h$ is proper and convex, and 
\begin{equation*}
\mathrm{dom}\left( {\mathcal{A}^{\infty }h}\right) =\mathrm{dom}\left( {h}%
\right) \cap \mathrm{dom}\left( h^{\ast }\circ i\right) .
\end{equation*}%
To prove that $\mathcal{A}^{\infty }h$ is a fixed point of $\mathcal{A}$
provided that it is lower semicontinuous we will use (ii). We have just shown that $D_{0}=%
\mathrm{dom}\left( {\mathcal{A}^{\infty }h}\right) $. We need to prove that $%
\left( \mathcal{A}^{\infty }h\right) ^{\ast }\circ i=\mathcal{A}^{\infty }h$
on $D_{0}$. Indeed, take $(x,x^{\ast })\in D_{0}$. By (iii) and \eqref{eq:14}%
, the sequences $\{\left( \mathcal{A}^{n}h\right) (x,x^{\ast })\}$ and $%
\{\left( \left( \mathcal{A}^{n}h\right) ^{\ast }\circ i\right) (x,x^{\ast
})\}$ are contained in $\mathbb{R}$. We have shown that the sequence 
$\{\left( \mathcal{A}^{n}h\right) (x,x^{\ast })\}$ converges monotonically.
Since, as noted earlier, the sequences  
$\{\left( \mathcal{A}^{n}h\right) (x,x^{\ast })\}$ and $\{\left( \left( 
\mathcal{A}^{n}h\right) ^{\ast }\circ i\right) (x,x^{\ast })\}$ are
adjacent, they have the same limit $\left( \mathcal{A}^{\infty }h\right)
(x,x^{\ast })$. Using this fact, for every $(x,x^{\ast })\in D_{0}$ we can
write%
\begin{eqnarray*}
\left( \left( \mathcal{A}^{\infty }h\right) ^{\ast }\circ i\right)
(x,x^{\ast }) &=&\left( \mathcal{A}^{\infty }h\right) ^{\ast }(x^{\ast },x)
\\
&=&\left( \inf_{n}\mathcal{A}^{n}{h}\right) ^{\ast }(x^{\ast
},x)=\sup_{n}\left( \mathcal{A}^{n}{h}\right) ^{\ast }(x^{\ast },x) \\
&=&\sup_{n}\left( \left( {\mathcal{A}^{n}}h\right) ^{\ast }\circ i\right)
(x,x^{\ast })=\lim_{n\rightarrow \infty }\left( \left( {\mathcal{A}^{n}}%
h\right) ^{\ast }\circ i\right) (x,x^{\ast }) \\
&=&\lim_{n\rightarrow \infty }\left( \mathcal{A}^{n}{h}\right) (x,x^{\ast
})=\left( \mathcal{A}^{\infty }h\right) (x,x^{\ast }),
\end{eqnarray*}%
showing that $\left( \mathcal{A}^{\infty }h\right) ^{\ast }\circ i=$$%
\mathcal{A}^{\infty }h$ on $D_{0}$. For $(x,x^{\ast })\notin D_{0}=\mathrm{%
dom}\left( {\mathcal{A}^{\infty }h}\right) $, we trivially have%
\begin{equation*}
\left( \left( \mathcal{A}^{\infty }h\right) ^{\ast }\circ i\right)
(x,x^{\ast })\leq \left( \mathcal{A}^{\infty }h\right) (x,x^{\ast })=+\infty
=\lim_{n\rightarrow \infty }\left( \mathcal{A}^{n}{h}\right) (x,x^{\ast }),
\end{equation*}%
the latter equality following from (iii). Hence $\mathcal{A}^{\infty }h\in 
\mathcal{H}_{\ast \leq }$. This and (ii) prove that $\mathcal{A}^{\infty }h$
is a fixed point of $\mathcal{A}$.

(v) The second inequality in \eqref{eq:16} follows from the definition of $%
\mathcal{A}^{\infty }h$. The first inequality follows from the monotonicity
of the sequences $\left\{ {\mathcal{A}^{n}}h\right\} _{n\geq 1}$ and $%
\left\{ \left( {\mathcal{A}^{n}}h\right) ^{\ast }\circ i\right\} _{n\geq 1}$
combined with (\ref{eq:14}). Indeed, for every $n\geq 1$ we have%
\begin{equation*}
\left( {\mathcal{A}^{n}}h\right) ^{\ast }\circ i\leq \sup_{m\geq 1}\left( {%
\mathcal{A}^{m}}h\right) ^{\ast }\circ i\leq \inf_{m\geq 1}{\mathcal{A}^{m}}%
h=\mathcal{A}^{\infty }h.
\end{equation*}

(vi) From (v), for every $(x,x^{\ast })\in \mathrm{dom}\left( {h}\right)
\cap \mathrm{dom}\left( h^{\ast }\circ i\right) $ we have 
\begin{equation*}
\left( \mathcal{A}^{\infty }h\right) (x,x^{\ast })=\lim_{n}\left( \left( {%
\mathcal{A}^{n}}h\right) ^{\ast }\circ i\right) (x,x^{\ast }).
\end{equation*}%
Since, according to the proof of (iv), for $(x,x^{\ast })\in \mathrm{dom}%
\left( {h}\right) \cap \mathrm{dom}\left( h^{\ast }\circ i\right) $ the
sequences $\{\left( \mathcal{A}^{n}h\right) (x,x^{\ast })\}$ and $\{\left(
\left( \mathcal{A}^{n}h\right) ^{\ast }\circ i\right) (x,x^{\ast })\}$ are
adjacent, we immediately obtain that $\{\left( \left( \mathcal{A}%
^{n}h\right) ^{\ast }\circ i\right) (x,x^{\ast })\}$ converges to $\left( 
\mathcal{A}^{\infty }h\right) (x,x^{\ast }).$

(vii) By (v) and (i) we have%
\begin{equation*}
\left( \mathcal{A}^{\infty }h\right) (x,x^{\ast })\geq \left( \left( 
\mathcal{A}h\right) ^{\ast }\circ i\right) (x,x^{\ast })\geq \left\langle
x,x^{\ast }\right\rangle
\end{equation*}%
for every $(x,x^{\ast })\in X\times X^{\ast }.$ Hence, if $x^{\ast }\in
T\left( x\right) ,$ by (v) and Remark \ref{rem:BS}, we have%
\begin{equation*}
\left\langle x,x^{\ast }\right\rangle \leq \left( \mathcal{A}^{\infty
}h\right) (x,x^{\ast })\leq \left( \mathcal{A}h\right) (x,x^{\ast
})=\left\langle x,x^{\ast }\right\rangle ,
\end{equation*}%
which proves that $\mathcal{A}^{\infty }h\in \mathcal{H}(T)$ provided that $%
\mathcal{A}^{\infty }h$ is lower semicontinuous.%
\end{proof}

\begin{Rem}
Since, according to the proof of (iv), one has $\left( \left( \mathcal{A}%
^{\infty }h\right) ^{\ast }\circ i\right) (x,x^{\ast })=\left( \mathcal{A}%
^{\infty }h\right) (x,x^{\ast })$ for every $(x,x^{\ast })\in \mathrm{dom}%
\left( {\mathcal{A}^{\infty }h}\right) ,$ the function ${\mathcal{A}^{\infty
}h}$ is lower semicontinuous  on its domain. Therefore, for the lower semicontinuity assumption of (iv)
and (vii) to hold, it is sufficient that ${\mathcal{A}^{\infty }h}$ be
lower semicontinuous on the boundary of its domain. In particular, this condition
automatically holds if the set $\mathrm{dom}\left( {h}\right) \cap \mathrm{%
dom}\left( h^{\ast }\circ i\right) $ is closed.
\end{Rem}

\begin{Rem}
\label{FY fixed}Since, by Remark \ref{FY auto}, the Fenchel-Young function $%
f^{FY}$ associated with a  lower semicontinuous proper convex function $f:X\longrightarrow 
\mathbb{R\cup }\left\{ +\infty \right\} $ is an autoconjugate
representation, it is a fixed point of $\mathcal{A}.$
\end{Rem}

We note that the function $\mathcal{A}^{\infty }h$ may fail to be an
autoconjugate convex representation of $T$, as the following example shows.

\begin{Exa}
\label{not auto}Let $T$ be the identity in a Hilbert space. In this case $%
\mathcal{F}_{T}(x,x^{\ast })=\frac{\Vert x+x^{\ast }\Vert ^{2}}{4}$ and%
\begin{equation*}
\left( \mathcal{F}_{T}^{\ast }\circ i\right) (x,x^{\ast })=\left\{ 
\begin{array}{lr}
\Vert x\Vert ^{2}, & \hbox{ if }x=x^{\ast }, \\ 
+\infty , & \hbox{ if }x\not=x^{\ast }.%
\end{array}%
\right.
\end{equation*}%
If we take $h:=\mathcal{F}{_{T}^{\ast }}\circ i$\textrm{\ }then\ it\ is\
easy\ to\ check\ that\ $\mathcal{A}h=h$,\ and\ hence\ $\mathcal{A}^{\infty
}h=h.$ On the other hand, $\mathcal{A}^{\infty }h=h$ is not an autoconjugate
, since\ $h^{\ast }\circ i=\mathcal{F}_{T}$. We have, however, $h^{\ast
}\circ i=h$ on the diagonal, that is, on $\mathrm{dom}(h).$
\end{Exa}

The preceding example shows that, in general, $\mathcal{A}^{\infty }{h}$ may
fail to be an autoconjugate of $T$. The next result establishes an
assumption on $h$ under which $\mathcal{A}^{\infty }{h}$ is an autoconjugate
of $T$.

\begin{Cor}
\label{cor:JE}With the notation of Theorem \ref{theo:JE}, let $h\in \mathcal{%
H}_{\ast \leq }$. Assume that the following qualification condition holds: 
\begin{equation*}
(\mathcal{Q}\mathcal{C})\quad \quad \mathrm{dom}(h)=\mathrm{dom}(h^{\ast
}\circ i) .
\end{equation*}
Then 
\begin{equation*}
\left( \mathcal{A}^{\infty }h\right) ^{\ast }\circ i=\mathcal{A}^{\infty }h.
\end{equation*}
\end{Cor}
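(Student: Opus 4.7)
The plan is to verify $\left(\mathcal{A}^{\infty}h\right)^{\ast}\circ i=\mathcal{A}^{\infty}h$ by splitting into two cases depending on whether the point lies in $D_{0}:=\mathrm{dom}(h)\cap\mathrm{dom}(h^{\ast}\circ i)$. By Theorem~\ref{theo:JE}(iv), $D_{0}=\mathrm{dom}(\mathcal{A}^{\infty}h)$, and under $(\mathcal{Q}\mathcal{C})$ we moreover have $D_{0}=\mathrm{dom}(h)=\mathrm{dom}(h^{\ast}\circ i)$. The case $(x,x^{\ast})\in D_{0}$ is essentially done inside the proof of Theorem~\ref{theo:JE}(iv): since $\{\mathcal{A}^{n}h\}_{n\ge 1}$ is pointwise non-increasing, interchanging suprema in the definition of the conjugate yields the general identity $(\inf_{n}\mathcal{A}^{n}h)^{\ast}=\sup_{n}(\mathcal{A}^{n}h)^{\ast}$, and applying $i$ gives
\begin{equation*}
\left(\mathcal{A}^{\infty}h\right)^{\ast}\circ i=\sup_{n}\left(\mathcal{A}^{n}h\right)^{\ast}\circ i.
\end{equation*}
By Theorem~\ref{theo:JE}(vi), on $D_{0}$ this supremum equals $\mathcal{A}^{\infty}h$, so the desired equality holds pointwise there.

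The real work is off $D_{0}$, and this is where both $h\in\mathcal{H}_{\ast\le}$ and $(\mathcal{Q}\mathcal{C})$ come in. Because $h\in\mathcal{H}_{\ast\le}$, one has $\mathcal{A}h=\frac{1}{2}(h+h^{\ast}\circ i)\le h$, and the monotonicity from Theorem~\ref{theo:JE}(iv) propagates this to $\mathcal{A}^{\infty}h\le h$ pointwise. Conjugating reverses the inequality, so $\left(\mathcal{A}^{\infty}h\right)^{\ast}\circ i\ge h^{\ast}\circ i$. Now for $(x,x^{\ast})\notin D_{0}$, the qualification condition forces $(h^{\ast}\circ i)(x,x^{\ast})=+\infty$, hence $\left(\mathcal{A}^{\infty}h\right)^{\ast}\circ i(x,x^{\ast})=+\infty=\mathcal{A}^{\infty}h(x,x^{\ast})$, since by Theorem~\ref{theo:JE}(iv) the right-hand side is also $+\infty$ outside $D_{0}$. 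Combining the two cases finishes the proof.

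The only technical point worth sanity-checking is the swap $(\inf_{n}f_{n})^{\ast}=\sup_{n}f_{n}^{\ast}$ used above; for a monotone sequence this reduces to a routine interchange of suprema in the definition of the Fenchel conjugate and requires no lower semicontinuity. Crucially, this means the corollary does not need the lsc hypothesis that appeared in parts (iv) and (vii) of Theorem~\ref{theo:JE}: the equality is forced ``by default'' outside $D_{0}$ once $(\mathcal{Q}\mathcal{C})$ is assumed, and autoconjugacy on $D_{0}$ already pins down the function there without invoking Theorem~\ref{theo:JE}(ii).
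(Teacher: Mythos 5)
Your proof is correct and follows essentially the same route as the paper's: equality on $D_{0}=\mathrm{dom}(h)$ via the conjugate-of-infimum computation already carried out inside the proof of Theorem \ref{theo:JE}(iv) (together with part (vi)), and equality off $D_{0}$ by showing both sides are $+\infty$ there, using $\mathcal{A}^{\infty}h\le h$ and $(\mathcal{Q}\mathcal{C})$. Your side remark that no lower semicontinuity of $\mathcal{A}^{\infty}h$ is needed is also accurate, since the relevant computation in the proof of part (iv) does not use it.
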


\begin{proof}
By Theorem 2.3 parts (iv) and (ii), we have $\left( \mathcal{A}^{\infty }h\right) ^{\ast
}\circ i=\mathcal{A}^{\infty }h$ on $\mathrm{dom}(h).$ On the other hand, by
(v) and (i) of Theorem 2.3, we have $\mathcal{A}^{\infty }{h\leq h,}$ and hence $\left( 
\mathcal{A}^{\infty }{h}\right) ^{\ast }{\geq h}^{\ast }.$ Therefore%
\begin{equation*}
\mathrm{dom}(\left( \mathcal{A}^{\infty }h\right) ^{\ast }\circ i)\subset 
\mathrm{dom}(h^{\ast }\circ i)=\mathrm{dom}(h).
\end{equation*}%
Since, by Theorem 2.3(iv), we have $\mathrm{dom}(\mathcal{A}^{\infty }h)=\mathrm{dom}%
(h),$ it follows that $\mathcal{A}^{\infty }h=+\infty =\left( \mathcal{A}%
^{\infty }h\right) ^{\ast }\circ i$ on $\left( X\times X^{\ast }\right)
\setminus \mathrm{dom}(h).$
\end{proof}

\bigskip

The limit $\mathcal{A}^{\infty }h$ found in the previous result provides a
constructive example of autoconjugate convex representation, in the
following sense.

\begin{Thm}
Consider the operator $\mathcal{A}$ given in Definition \ref{def:A}. Let$%
\mathcal{A}^{\infty }h$ be as in Theorem \ref{theo:JE} and $\left( x,x^{\ast
}\right) \in X\times X^{\ast }.$ If $\max \left\{ \left( \mathcal{A}h\right)
\left( x,x^{\ast }\right) ,\left( \left( \mathcal{A}h\right) ^{\ast }\circ
i\right) \left( x,x^{\ast }\right) \right\} =+\infty ,$ then $\left( 
\mathcal{A}^{\infty }h\right) \left( x,x^{\ast }\right) =+\infty .$ If $\max
\left\{ \left( \mathcal{A}h\right) \left( x,x^{\ast }\right) ,\left( 
\mathcal{A}h\right) ^{\ast }(x^{\ast },x)\right\} <+\infty $ and $\epsilon
>0 $, setting $n>1+\log _{2}\left( \left( \mathcal{A}h\right) \left(
x,x^{\ast }\right) -\left( \left( \mathcal{A}h\right) ^{\ast }\circ i\right)
\left( x,x^{\ast }\right) \right) -\log _{2}\epsilon ,$ one has $\left( 
\mathcal{A}^{n}h\right) (x,x^{\ast })-\epsilon \leq \left( \mathcal{A}%
^{\infty }h\right) \left( x,x^{\ast }\right) ;$ this provides a convenient
stopping criterion for effectively computing $\left( \mathcal{A}^{\infty
}h\right) \left( x,x^{\ast }\right) $ with an error smaller than $\epsilon $
by means of the iteration $\left( \mathcal{A}^{n}h\right) (x,x^{\ast }).$
\end{Thm}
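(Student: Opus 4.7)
The plan is to handle the two cases separately. Both rely entirely on machinery already packaged in Theorem \ref{theo:JE}; the non-trivial piece is the rate of convergence, which I will extract from the geometric decay buried in the proof of part (iv) of that theorem.

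\textbf{Case 1} (some value is $+\infty$). By Theorem \ref{theo:JE}(i), $\mathcal{A}h \in \mathcal{H}_{\ast\leq}$, so $(\mathcal{A}h)^{\ast}\circ i \leq \mathcal{A}h$ pointwise. Hence whenever $\max\{(\mathcal{A}h)(x,x^{\ast}),((\mathcal{A}h)^{\ast}\circ i)(x,x^{\ast})\}=+\infty$, we in fact have $(\mathcal{A}h)(x,x^{\ast}) = +\infty$; equivalently, $(x,x^{\ast}) \notin \mathrm{dom}(h)\cap\mathrm{dom}(h^{\ast}\circ i)$. Theorem \ref{theo:JE}(iii) then gives $(\mathcal{A}^{n}h)(x,x^{\ast}) = +\infty$ for every $n\geq 1$, and the domain identification in (iv) gives the same conclusion for $\mathcal{A}^{\infty}h$.

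\textbf{Case 2} (both values finite, quantitative bound). The proof of Theorem \ref{theo:JE}(iv) shows, by iterating the recursion $(\mathcal{A}^{n+1}h - (\mathcal{A}^{n+1}h)^{\ast}\circ i)\leq \frac{1}{2}(\mathcal{A}^{n}h - (\mathcal{A}^{n}h)^{\ast}\circ i)$, the key geometric estimate
\begin{equation*}
0 \leq \bigl(\mathcal{A}^{n}h - (\mathcal{A}^{n}h)^{\ast}\circ i\bigr)(x,x^{\ast}) \leq \frac{1}{2^{n-1}}\bigl(\mathcal{A}h - (\mathcal{A}h)^{\ast}\circ i\bigr)(x,x^{\ast}).
\end{equation*}
Combining this with the sandwich from Theorem \ref{theo:JE}(v), namely $(\mathcal{A}^{n}h)^{\ast}\circ i \leq \mathcal{A}^{\infty}h \leq \mathcal{A}^{n}h$, yields
\begin{equation*}
0 \leq (\mathcal{A}^{n}h)(x,x^{\ast}) - (\mathcal{A}^{\infty}h)(x,x^{\ast}) \leq \frac{1}{2^{n-1}}\bigl(\mathcal{A}h - (\mathcal{A}h)^{\ast}\circ i\bigr)(x,x^{\ast}).
\end{equation*}
Requiring the right-hand side to be at most $\epsilon$ is equivalent to the inequality $n - 1 \geq \log_{2}\bigl((\mathcal{A}h)(x,x^{\ast}) - ((\mathcal{A}h)^{\ast}\circ i)(x,x^{\ast})\bigr) - \log_{2}\epsilon$, which is precisely the stopping criterion in the statement (with a strict inequality to accommodate the integer rounding).

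I do not foresee a serious obstacle since all the hard analytic work is already done in Theorem \ref{theo:JE}. The only point requiring a small note is the degenerate situation in which $(\mathcal{A}h)(x,x^{\ast}) = ((\mathcal{A}h)^{\ast}\circ i)(x,x^{\ast})$: here the right-hand side of the geometric estimate is zero and the $\log_{2}$ argument vanishes, but the conclusion remains vacuously true because the adjacency argument in the proof of Theorem \ref{theo:JE}(iv) immediately forces $(\mathcal{A}^{n}h)(x,x^{\ast}) = (\mathcal{A}^{\infty}h)(x,x^{\ast})$ for all $n\geq 1$.
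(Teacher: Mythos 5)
Your proposal is correct and follows essentially the same route as the paper: the infinite case is dispatched via the domain identities of Theorem \ref{theo:JE}(iii)--(iv) (you merely observe first, via part (i), that the maximum is always attained at $\left(\mathcal{A}h\right)(x,x^{\ast})$, where the paper splits on whether the conjugate term is infinite), and the quantitative bound comes from combining the sandwich $(\mathcal{A}^{n}h)^{\ast}\circ i\leq\mathcal{A}^{\infty}h\leq\mathcal{A}^{n}h$ of part (v) with the geometric halving of the gap $\mathcal{A}^{n}h-(\mathcal{A}^{n}h)^{\ast}\circ i$, which the paper re-derives in situ and you instead cite from the proof of part (iv). Your explicit treatment of the degenerate case $\left(\mathcal{A}h\right)(x,x^{\ast})=\left(\left(\mathcal{A}h\right)^{\ast}\circ i\right)(x,x^{\ast})$ matches the paper's parenthetical remark.
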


\begin{proof}
If $\left( \mathcal{A}h\right) ^{\ast }(x^{\ast },x)=+\infty ,$ then $\left( 
\mathcal{A}^{\infty }h\right) (x,x^{\ast })=+\infty .$ Let us then assume
that $\left( \mathcal{A}h\right) ^{\ast }(x^{\ast },x)<+\infty .$ If $\left( 
\mathcal{A}h\right) (x,x^{\ast })=+\infty ,$ then, by (iv) and (iii) of
Theorem \ref{theo:JE}, we also have $\left( \mathcal{A}^{\infty }h\right)
(x,x^{\ast })=+\infty .$ Consider now the case when both $\left( \mathcal{A}%
h\right) \left( x,x^{\ast }\right) $ and $\left( \mathcal{A}h\right) ^{\ast
}(x^{\ast },x)$ are finite. For $n\geq 1,$ from the inequalities%
\begin{equation*}
\left( \mathcal{A}h\right) ^{\ast }\circ i\leq \left( \mathcal{A}%
^{n}h\right) ^{\ast }\circ i\leq \mathcal{A}^{\infty }h\leq \mathcal{A}%
^{n}h\leq \mathcal{A}h
\end{equation*}%
and the fact that the sequence $\left\{ \left( \mathcal{A}^{n}h\right)
  ^{\ast }\right\}$ is increasing, it follows that%
\begin{eqnarray*}
0 &\leq &  \left( \mathcal{A}^{n+1}h\right) (x,x^{\ast })-\left( \mathcal{A}^{\infty }h\right) (x,x^{\ast }) \\
&\leq &\left( \mathcal{A}^{n+1}h\right) (x,x^{\ast })-\left( \left( \mathcal{A}^{n+1}h\right) ^{\ast }\circ i\right) (x,x^{\ast }) \\
&\leq &\left( \mathcal{A}^{n+1}h\right) (x,x^{\ast })-\left( \left( \mathcal{A}^{n}h\right) ^{\ast }\circ i\right) (x,x^{\ast }) \\
&=&\frac{1}{2}\left(   \left(\mathcal{A}^{n}h\right) (x,x^{\ast }) + \left(\left(
    \mathcal{A}^{n}h\right) ^{\ast }\circ i\right) (x,x^{\ast })\right)   -
             \left(\left(\mathcal{A}^{n}h\right) ^{\ast }\circ i\right) (x,x^{\ast })\\
&=&\frac{1}{2}\left(   \left(\mathcal{A}^{n}h\right) (x,x^{\ast }) - \left(\left(
    \mathcal{A}^{n}h\right) ^{\ast }\circ i\right) (x,x^{\ast })\right) ;
\end{eqnarray*}
hence%
\begin{equation*}
\left( \mathcal{A}^{n+1}h\right) (x,x^{\ast })-\left( \mathcal{A}^{\infty
}h\right) (x,x^{\ast })\leq \frac{1}{2^{n}}\left( \left( \mathcal{A}h\right)
(x,x^{\ast })-\left( \left( \mathcal{A}h\right) ^{\ast }\circ i\right)
(x,x^{\ast })\right) ;
\end{equation*}%
therefore, if $\left( \mathcal{A}h\right) \left( x,x^{\ast }\right) \neq
\left( \left( \mathcal{A}h\right) ^{\ast }\circ i\right) \left( x,x^{\ast
}\right) $ (otherwise, $\left( \mathcal{A}^{\infty }h\right) \left(
x,x^{\ast }\right) \newline
=\left( \mathcal{A}h\right) \left( x,x^{\ast }\right) $), then for $\epsilon
>0,$ taking $n>1+\log _{2}\left( \left( \mathcal{A}h\right) \left( x,x^{\ast
}\right) -\left( \!\left( \mathcal{A}h\right) ^{\ast }\circ i\right) \left(
x,x^{\ast }\right) \right) -\log _{2}\epsilon ,$ we have%
\begin{equation*}
\frac{1}{2^{n-1}}\left( \left( \mathcal{A}h\right) \left( x,x^{\ast }\right)
-\left( \left( \mathcal{A}h\right) ^{\ast }\circ i\right) \left( x,x^{\ast
}\right) \right) <\epsilon ,
\end{equation*}%
and hence%
\begin{equation*}
\left( \mathcal{A}^{n}h\right) \left( x,x^{\ast }\right) -\epsilon \leq
\left( \mathcal{A}^{\infty }h\right) \left( x,x^{\ast }\right) \leq \left( 
\mathcal{A}^{n}h\right) \left( x,x^{\ast }\right) .
\end{equation*}%

\end{proof}

\section{A family of enlargements}

We now introduce and investigate a family of enlargements, denoted $\mathbb{E%
}_{\mathcal{H}}(T)$, which is inspired by Fitzpatrick's paper \cite{F}. As
we will see next, $\mathbb{E}_{\mathcal{H}}(T)\subset \mathbb{E}(T)$ and
therefore this new family inherits all the good properties of the elements
of $\mathbb{E}(T)$. In its definition we will use the $\epsilon $%
-subdifferential of a function $h\in {\mathcal{H}}(T)$. Equation \eqref{Tg}
gives an operator associated to a convex function $h$ defined on $X\times
X^{\ast }$. We now extend this operator so that it results in a point-to-set
mapping defined on $\mathbb{R}_{+}\times X$.

\begin{Def}
\label{Th:def} Let ${T}:X\rightrightarrows X^{\ast }$ be maximally monotone.
For $h\in {\mathcal{H}}(T)$ and $\epsilon \geq 0,$ we define $\breve{T}_{h}:%
\mathbb{R}_{+}\times X\rightrightarrows X^{\ast }$ by%
\begin{equation}
\breve{T}_{h}(\epsilon ,x):=\{x^{\ast }\in X^{\ast }:(x^{\ast },x)\in \breve{%
{\partial }}h(2\epsilon ,x,x^{\ast })\}.  \label{eq:Th}
\end{equation}
\end{Def}

\begin{Rem}
By Definition \ref{Th:def} and \eqref{Tg}, if $h\in \mathcal{H}(T)$ then 
\begin{equation*}
T_{h}(x)=\breve{T}_{h}(0,x).
\end{equation*}
\end{Rem}

\begin{Rem}
\label{rem:Lh=E} Following Burachik and Svaiter \cite{BS02}, we can associate
with $h\in \mathcal{H}(T)$ an enlargement $L^{h}\in \mathbb{E}(T)$ as
follows. For $\epsilon \geq 0$ and $x\in X$ we set 
\begin{equation*}
L^{h}(\epsilon ,x):=\{x^{\ast }\in X^{\ast }:h\left( x,x^{\ast }\right) \leq
\langle x,x^{\ast }\rangle +\epsilon \}.
\end{equation*}%
Conversely, it was also shown in \cite{BS02} that with every $E\in \mathbb{E}%
(T)$ we can associate a unique $h\in \mathcal{H}(T)$ such that $E=L^{h}$.
\end{Rem}

\begin{Rem}
\label{FY enlargement}For every lower semicontinuous  proper convex function $%
f:X\longrightarrow \mathbb{R\cup }\left\{ +\infty \right\} ,$ one has $%
L^{f^{FY}}=\breve{{\partial }}f.$
\end{Rem}

We next give a specific notation to the unique $h$ associated with an
enlargement $E\in \mathbb{E}(T).$

\begin{Def}
\label{def:hE}\textrm{\ }Given $E\in \mathbb{E}(T)$, denote by $h_{E}$ the
unique $h\in \mathcal{H}(T)$ such that $E=L^{h}$.
\end{Def}

\begin{Pro}
\label{pro:Lh}Let $T:X\rightrightarrows X^{\ast }$ be maximally monotone,
and fix $h\in \mathcal{H}(T)$. Then $\breve{T}_{h}=L^{\mathcal{A}h}\in 
\mathbb{E}(T)$, where $\mathcal{A}$ is as in Definition 2.4.
\end{Pro}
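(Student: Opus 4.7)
The plan is to unpack the definition of $\breve{T}_h(\epsilon,x)$ using the Fenchel--Young characterization \eqref{eq:10} of the Br\o ndsted--Rockafellar enlargement applied to the convex function $h$ on $X\times X^{\ast}$, and then to recognize the resulting inequality as precisely the condition defining $L^{\mathcal{A}h}(\epsilon,x)$. The membership in $\mathbb{E}(T)$ will then follow from Remark \ref{rem:BS} combined with Remark \ref{rem:Lh=E}.

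More concretely, first I would observe that since $h$ is a lower semicontinuous proper convex function on $X\times X^{\ast}$, the characterization \eqref{eq:10} (with $f$ replaced by $h$) yields that $(x^{\ast},x)\in \breve{\partial}h(2\epsilon, x,x^{\ast})$ is equivalent to
\begin{equation*}
h(x,x^{\ast})+h^{\ast}(x^{\ast},x)\leq \langle (x,x^{\ast}),(x^{\ast},x)\rangle +2\epsilon.
\end{equation*}
The key (trivial) computation is that, under the dual pairing fixed in Section 2, $\langle (x,x^{\ast}),(x^{\ast},x)\rangle = \langle x,x^{\ast}\rangle+\langle x,x^{\ast}\rangle = 2\langle x,x^{\ast}\rangle$. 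Dividing the displayed inequality by $2$ and invoking the definition $(\mathcal{A}h)(x,x^{\ast})=\tfrac{1}{2}\bigl(h(x,x^{\ast})+(h^{\ast}\circ i)(x,x^{\ast})\bigr)$ turns the inequality into
\begin{equation*}
(\mathcal{A}h)(x,x^{\ast})\leq \langle x,x^{\ast}\rangle +\epsilon,
\end{equation*}
which is exactly the condition $x^{\ast}\in L^{\mathcal{A}h}(\epsilon,x)$ from Remark \ref{rem:Lh=E}. Hence $\breve{T}_h(\epsilon,x)=L^{\mathcal{A}h}(\epsilon,x)$ for every $\epsilon\geq 0$ and every $x\in X$.

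To conclude $\breve{T}_h\in \mathbb{E}(T)$, I would recall from Remark \ref{rem:BS} that $\mathcal{A}h\in \mathcal{H}(T)$ whenever $h\in \mathcal{H}(T)$, so that by Remark \ref{rem:Lh=E} the enlargement $L^{\mathcal{A}h}$ belongs to $\mathbb{E}(T)$; combined with the equality just established, this gives $\breve{T}_h=L^{\mathcal{A}h}\in \mathbb{E}(T)$.

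There is no real obstacle here: the entire argument is a direct rewriting using \eqref{eq:10} and the definition of $\mathcal{A}$. The only point that deserves a line of care is the identification of the dual variable $(x^{\ast},x)$ via the involution $i$ and the corresponding computation $\langle (x,x^{\ast}),(x^{\ast},x)\rangle=2\langle x,x^{\ast}\rangle$, which is exactly the reason the factor $2\epsilon$ appears in Definition \ref{Th:def} and ultimately produces a genuine $\epsilon$ on the right-hand side after dividing by $2$.
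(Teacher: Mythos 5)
Your argument is correct and is essentially the paper's own proof: both unpack Definition \ref{Th:def} via the Fenchel--Young characterization of $\breve{\partial}h$, use $\langle (x,x^{\ast}),(x^{\ast},x)\rangle = 2\langle x,x^{\ast}\rangle$ to divide by $2$ and recognize the condition $(\mathcal{A}h)(x,x^{\ast})\leq \langle x,x^{\ast}\rangle+\epsilon$, and then invoke Remark \ref{rem:BS} together with Remark \ref{rem:Lh=E} to place $L^{\mathcal{A}h}$ in $\mathbb{E}(T)$. No issues.
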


\begin{proof}
Our proof will follow from Remark \ref{rem:Lh=E}. Indeed, from Fenchel-Young
inequality, we see that $x^{\ast }\in \breve{T}_{h}(\epsilon ,x)$ if and
only if 
\begin{equation*}
h(x,x^{\ast })+h^{\ast }(x^{\ast },x)\leq \langle (x,x^{\ast }),(x^{\ast
},x)\rangle +2\epsilon =2\left( \langle x,x^{\ast }\rangle +\epsilon \right)
.
\end{equation*}%
Equivalently, $x^{\ast }\in \breve{T}_{h}(\epsilon ,x)$ if and only if $%
x^{\ast }\in L^{\mathcal{A}h}(\epsilon ,x).$ Hence $\breve{T}_{h}=L^{%
\mathcal{A}h}\in \mathbb{E}(T)$.
\end{proof}


\begin{Cor}
Let $f:X\longrightarrow \mathbb{R\cup }\left\{ +\infty \right\} $ be a lower
semicontinuos proper convex function. Then $\breve{{\partial }}f=\breve{T}%
_{f^{FY}}$.
\end{Cor}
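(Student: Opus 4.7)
The plan is to assemble the corollary by chaining together three facts already established in the excerpt, so essentially no new computation is needed.

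First, I would apply Proposition \ref{pro:Lh} to $h := f^{FY}$. This is legitimate because, by Remark \ref{FY auto}, $f^{FY}\in \mathcal{H}(\partial f)$, so Proposition \ref{pro:Lh} gives immediately
\begin{equation*}
\breve{T}_{f^{FY}} = L^{\mathcal{A}(f^{FY})} \in \mathbb{E}(\partial f).
\end{equation*}

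Next, I would argue that $\mathcal{A}$ fixes $f^{FY}$. Indeed, Remark \ref{FY auto} states that $f^{FY}$ is an autoconjugate convex representation of $\partial f$, i.e. $(f^{FY})^{\ast}\circ i = f^{FY}$ (this is also the content of Remark \ref{FY fixed}). Plugging this into the definition of $\mathcal{A}$ in \eqref{eq:61} gives
\begin{equation*}
\mathcal{A}(f^{FY}) = \tfrac{1}{2}\bigl(f^{FY} + (f^{FY})^{\ast}\circ i\bigr) = f^{FY}.
\end{equation*}
Hence $\breve{T}_{f^{FY}} = L^{f^{FY}}$.

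Finally, I would invoke Remark \ref{FY enlargement}, which asserts that $L^{f^{FY}} = \breve{{\partial}}f$. Chaining the three equalities yields $\breve{{\partial}}f = \breve{T}_{f^{FY}}$, as claimed. There is no real obstacle here: the corollary is essentially the observation that, for $h$ already autoconjugate (which is the case for $f^{FY}$), the operator $\mathcal{A}$ acts as the identity, so Proposition \ref{pro:Lh} collapses to the already-known identification of $\breve{\partial}f$ with $L^{f^{FY}}$.
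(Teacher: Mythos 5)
Your proof is correct and follows exactly the paper's own argument: the paper also chains Proposition \ref{pro:Lh}, the fixed-point property of $f^{FY}$ under $\mathcal{A}$ (Remark \ref{FY fixed}), and the identification $L^{f^{FY}}=\breve{\partial}f$ (Remark \ref{FY enlargement}) to get $\breve{T}_{f^{FY}}=L^{\mathcal{A}f^{FY}}=L^{f^{FY}}=\breve{\partial}f$. Your explicit unpacking of why $\mathcal{A}$ fixes $f^{FY}$ via autoconjugacy is just a slightly more detailed rendering of the same step.
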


\begin{proof}
By Proposition \ref{pro:Lh} and Remarks \ref{FY fixed} and \ref{FY
enlargement}, we have%
\begin{equation*}
\breve{T}_{f^{FY}}=L^{\mathcal{A}f^{FY}}=L^{f^{FY}}=\breve{{\partial }}f.
\end{equation*}%
%
%
%
%
%
%
%
%
%
%
\end{proof}

\bigskip

Motivated by the preceding result, for a maximally monotone $%
T:X\rightrightarrows X^{\ast }$ we define the following family of
enlargements.%
\begin{equation}
\mathbb{E}_{\mathcal{H}}(T):=\{E\in \mathbb{E}(T):\hbox{there exists }h\in 
\mathcal{H}(T)\hbox{
  s.t. }E=\breve{T}_{h}\}.  \label{eq:8}
\end{equation}

Proposition \ref{pro:Lh} yields the following result.

\begin{Cor}
\label{pro:Th} Let $T$ and $h$ be as in Proposition \ref{pro:Lh}.

\begin{itemize}
\item[(i)] For every $x\in \dom (T)$ and every $\epsilon \geq 0$, the
set $\breve{T}_{h}(\epsilon ,x)$ is convex;

\item[(ii)] The graph of the mapping $\breve{T}_{h}$ is \emph{demi-closed}.
Namely, if $\{x_{n}\}\subset X$ converges strongly (weakly) to $x$, $%
\{x_{n}^{\ast }\}\subset \breve{T}_{h}(\epsilon _{n},x_{n})$ converges
weakly (strongly, respectively) to $x^{\ast }$, and $\{\epsilon _{n}\}$
converges to $\epsilon \geq 0$, then $x^{\ast }\in \breve{T}_{h}(\epsilon
,x) $. In particular, $\breve{T}_{h}(\epsilon ,x)$ is weakly closed;

\item[(iii)] $\breve{T}_{h}(0,x)=T(x)$ for every $x\in X$.
\end{itemize}
\end{Cor}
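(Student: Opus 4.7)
The starting point of the plan is Proposition \ref{pro:Lh}, which identifies $\breve{T}_h$ with the sublevel-set enlargement $L^{\mathcal{A}h}$; that is,
\begin{equation*}
\breve{T}_h(\epsilon,x) = \{x^*\in X^* : \mathcal{A}h(x,x^*) \le \langle x,x^*\rangle + \epsilon\}.
\end{equation*}
Once this reduction is in hand, all three claims become statements about $\mathcal{A}h$ rather than about $\breve{T}_h$, and the argument reduces to combining the convexity and lower semicontinuity properties of $\mathcal{A}h$ established in Section 2.

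For part (i), for fixed $x\in X$ the function $x^*\mapsto \mathcal{A}h(x,x^*)-\langle x,x^*\rangle$ is the restriction of the convex function $\mathcal{A}h$ minus an affine function in $x^*$, hence convex. Its sublevel set at level $\epsilon$ is therefore convex. For part (iii), with $\epsilon=0$, the defining inequality reads $\mathcal{A}h(x,x^*)\le\langle x,x^*\rangle$. Since, by Remark \ref{rem:BS}, $\mathcal{A}h\in\mathcal{H}(T)$, the reverse inequality $\mathcal{A}h(x,x^*)\ge \langle x,x^*\rangle$ always holds, so equality holds; the characterization of $\mathop{\rm gph\,}T$ by any element of $\mathcal{H}(T)$ (cited from \cite[Corollary 3.7]{BS02} in the paragraph before Theorem \ref{Theo0}) then gives $x^*\in T(x)$, and the converse is immediate from (\ref{eq:defh2}).

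The main effort lies in part (ii). The plan is to prove that $\mathcal{A}h$ is lower semicontinuous with respect to both product topologies $\|\cdot\|\times w$ and $w\times\|\cdot\|$ on $X\times X^*$. Since $h\in\mathcal{H}(T)$ is $(\|\cdot\|\times w)$-lsc and $X$ is reflexive, $h^*$ is $(w\times\|\cdot\|)$-lsc on $X^*\times X$, and a direct verification shows that $h^*\circ i$ is $(\|\cdot\|\times w)$-lsc on $X\times X^*$; hence $\mathcal{A}h=\tfrac{1}{2}(h+h^*\circ i)$ is $(\|\cdot\|\times w)$-lsc as well. Because $\mathcal{A}h$ is convex, its norm-closed sublevel sets are also weakly closed, which yields $(w\times w)$-lsc and a fortiori $(w\times\|\cdot\|)$-lsc. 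In either demi-convergence mode the bilinear pairing satisfies $\langle x_n,x_n^*\rangle\to\langle x,x^*\rangle$, so taking the $\liminf$ in $\mathcal{A}h(x_n,x_n^*)\le\langle x_n,x_n^*\rangle+\epsilon_n$ delivers $\mathcal{A}h(x,x^*)\le\langle x,x^*\rangle+\epsilon$, proving demi-closedness. Weak closedness of $\breve{T}_h(\epsilon,x)$ then follows by specializing to the constant sequence $x_n\equiv x$ with $\epsilon_n\equiv\epsilon$.

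The only delicate point is the passage from $(\|\cdot\|\times w)$-lsc of $\mathcal{A}h$ to $(w\times\|\cdot\|)$-lsc, which relies on a topology-comparison argument (coarser topology $\Rightarrow$ stronger lsc, plus convexity $\Rightarrow$ weak lsc from norm lsc) rather than on any deeper property of the operator $T$. Apart from this, the proof is a direct bookkeeping exercise using Proposition \ref{pro:Lh} and the facts collected in Remark \ref{rem:BS}.
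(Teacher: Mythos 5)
Your proof is correct, and part (i) coincides with the paper's argument (convexity of $\mathcal{A}h$ plus the identification $\breve{T}_{h}=L^{\mathcal{A}h}$). For parts (ii) and (iii), however, you take a genuinely different and more self-contained route. For (ii) the paper simply invokes Svaiter's result that \emph{every} element of $\mathbb{E}(T)$ has a demi-closed graph (\cite[Proposition 4.3]{SV}), applied to $L^{\mathcal{A}h}$; you instead reprove demi-closedness from scratch by showing that $\mathcal{A}h$ is lower semicontinuous for the relevant product topologies. Your chain of reasoning there is sound: $h^{\ast}$ is $\sigma(X^{\ast}\times X,\,X\times X^{\ast})$-lsc as a conjugate, so $h^{\ast}\circ i$ and hence $\mathcal{A}h$ are $(\Vert\cdot\Vert\times w)$-lsc, and convexity upgrades norm-lsc to $(w\times w)$-lsc, which covers both demi-convergence modes once one notes that $\langle x_{n},x_{n}^{\ast}\rangle\rightarrow\langle x,x^{\ast}\rangle$ (using boundedness of weakly convergent sequences). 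This buys you independence from the external reference, at the cost of redoing work the family $\mathbb{E}(T)$ already provides for free. For (iii) the paper passes through $\breve{T}_{h}(0,x)=T_{h}(x)$ and Proposition \ref{pro2:Th} ($T_{h}=T$), whereas you argue via the level-set description: $\mathcal{A}h(x,x^{\ast})\leq\langle x,x^{\ast}\rangle$ forces equality since $\mathcal{A}h\in\mathcal{H}(T)$, and \cite[Corollary 3.7]{BS02} identifies the equality set with $\mathop{\rm gph\,}(T)$. Both routes are legitimate; yours is arguably more uniform, since all three parts are reduced to properties of the single function $\mathcal{A}h$.
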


\begin{proof}
For part (i), we use Proposition \ref{pro:Lh}. Indeed, the function $%
\mathcal{A}h\in \mathcal{H}(T)$ is convex, and hence it is direct to check
that the set $L^{\mathcal{A}h}(\epsilon ,x)=\breve{T}_{h}(\epsilon ,x)$ is
convex. For part (ii), we use again Proposition \ref{pro:Lh} and \cite[%
Proposition 4.3]{SV}. The latter states that every $E\in \mathcal{H}(T)$ has
a demi-closed graph. 
Part (iii) also follows directly from Proposition \ref{pro2:Th}:%
\begin{equation*}
\breve{T}_{h}(0,x)=T_{h}(x)=T\left( x\right).
\end{equation*}
\end{proof}

\bigskip

The following result is another straightforward consequence of Proposition %
\ref{pro:Lh}.

\begin{Cor}
\label{cor:Te} Let $T:X\rightrightarrows X^{\ast }$ be maximally monotone.
Then, for every $h\in \mathcal{H}(T),$ $\epsilon \geq 0$ and $x\in X$, we
have $\breve{T}_{h}(\epsilon ,x)\subset T^{\mathrm{BE}}(\epsilon ,x)$;\ in
particular, $\breve{T}_{\mathcal{F}_{T}}(\epsilon ,x)\subset T^{\mathrm{BE}%
}(\epsilon ,x)$.
\end{Cor}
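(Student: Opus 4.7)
The plan is to reduce the claimed set inclusion to a pointwise inequality between two convex representations of $T$, and then invoke the extremality property of the Fitzpatrick function. More precisely, by Proposition \ref{pro:Lh} we have $\breve{T}_h = L^{\mathcal{A}h}$, so the membership $x^{\ast} \in \breve{T}_h(\epsilon,x)$ is equivalent to the scalar inequality
\[
(\mathcal{A}h)(x,x^{\ast}) \leq \langle x, x^{\ast}\rangle + \epsilon.
\]
On the other hand, as recalled in the discussion of convex representations following equation \eqref{eq:2}, the biggest enlargement $T^{\mathrm{BE}}$ admits the characterization $x^{\ast} \in T^{\mathrm{BE}}(\epsilon,x) \iff \mathcal{F}_T(x,x^{\ast}) \leq \langle x, x^{\ast}\rangle + \epsilon$. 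Hence it suffices to verify the pointwise bound $\mathcal{F}_T \leq \mathcal{A}h$ on $X \times X^{\ast}$.

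This last bound is immediate from the results already assembled. By Remark \ref{rem:BS}, the function $\mathcal{A}h$ belongs to $\mathcal{H}(T)$. By Theorem \ref{Theo0}(b), $\mathcal{F}_T$ is the smallest element of $\mathcal{H}(T)$. Therefore $\mathcal{F}_T(x,x^{\ast}) \leq (\mathcal{A}h)(x,x^{\ast})$ for every $(x,x^{\ast}) \in X\times X^{\ast}$, and the desired chain
\[
\mathcal{F}_T(x,x^{\ast}) \leq (\mathcal{A}h)(x,x^{\ast}) \leq \langle x,x^{\ast}\rangle + \epsilon
\]
gives the inclusion $\breve{T}_h(\epsilon,x) \subset T^{\mathrm{BE}}(\epsilon,x)$.

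The ``in particular'' part is the specialization $h=\mathcal{F}_T$, which is admissible because $\mathcal{F}_T \in \mathcal{H}(T)$ by Theorem \ref{Theo0}(b). I do not anticipate any genuine obstacle here: the entire content of the statement is the translation, via Proposition \ref{pro:Lh}, from the language of enlargements to the language of convex representations, followed by a direct appeal to the fact that $\mathcal{F}_T$ is minimal in $\mathcal{H}(T)$. The only point worth stating carefully in the write-up is the equivalence between membership in $\breve{T}_h(\epsilon,x)$ and the inequality $(\mathcal{A}h)(x,x^{\ast}) \leq \langle x,x^{\ast}\rangle + \epsilon$, which has already been established in the proof of Proposition \ref{pro:Lh}.
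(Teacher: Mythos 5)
Your proof is correct and follows essentially the same route as the paper: the paper's one-line argument invokes Proposition \ref{pro:Lh} together with the maximality of $T^{\mathrm{BE}}$ in $\mathbb{E}(T)$, and your version simply unpacks that via the order-reversing correspondence $E=L^{h}$, i.e.\ via the minimality of $\mathcal{F}_{T}$ in $\mathcal{H}(T)$ and the characterization $x^{\ast}\in T^{\mathrm{BE}}(\epsilon,x)\iff\mathcal{F}_{T}(x,x^{\ast})\leq\langle x,x^{\ast}\rangle+\epsilon$. All the ingredients you cite (Remark \ref{rem:BS}, Theorem \ref{Theo0}(b)) are available at this point in the paper, so there is nothing to fix.
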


\begin{proof}
The proof is a direct consequence of Proposition \ref{pro:Lh} and the fact
that $T^{\mathrm{BE}}$ is the biggest element in $\mathcal{H}(T) $. 
\end{proof}

\subsection{\bf Additivity}

Following \cite{SV}, an enlargement $E\in \mathbb{E}(T)$ is said to be \emph{%
\ additive} when for every $x^{\ast }\in E(\epsilon _{1},x)$ and every $%
y^{\ast }\in E(\epsilon _{2},y)$ we have 
\begin{equation}
\langle x-y,x^{\ast }-y^{\ast }\rangle \geq -(\epsilon _{1}+\epsilon _{2}).
\label{eq:3}
\end{equation}%
Given $E\in \mathbb{E}(T)$, and $h_{E}$ as in Definition \ref{def:hE}, we
say that $h_{E}$ is \emph{additive} whenever $E$ is additive. In other
words, $h\in \mathcal{H}(T)$ is additive if and only if $L^{h}$ is additive.
We define the following sets 
\begin{equation*}
\mathcal{H}_{{a}}(T):=\{h\in \mathcal{H}(T):L^{h}\hbox{ is
additive}\},
\end{equation*}%
\begin{equation*}
\mathbb{E}_{{a}}(T):=\{E\in \mathbb{E}(T):E\hbox{ is additive}\}.
\end{equation*}%
In the definition below, we use some sets and notation introduced in \cite%
{S03}.

\begin{Def}
\label{def2:additivity}\textrm{\ }For a maximally monotone $%
T:X\rightrightarrows X^{\ast }$ and $h\in \mathcal{H}(T),$ define 
\begin{equation}
S(h):=\{g\in \mathcal{H}(T):h\geq g\geq g^{\ast }\circ i\}.  \label{eq:5}
\end{equation}%
We say that $g\in S(h)$ is minimal (on $S(h)$) if, whenever there is $%
g^{\prime }\in S(h)$ such that $g^{\prime }\leq g$, we must have $%
g=g^{\prime }$.
\end{Def}

\begin{Rem}
\label{rem1:additivity}Given $E\in \mathbb{E}(T)$, let $h_{E}\in \mathcal{H}%
(T)$ be as in Definition \ref{def:hE}. In other words, we have $E=L^{h_{E}}$%
. Proposition 5.5 in \cite{BS02} states that 
\begin{equation*}
E=L^{h_{E}}\in \mathbb{E}_{{a}}(T)\Longleftrightarrow h_E^{\ast }\circ i\leq
h_{E}.
\end{equation*}
\end{Rem}

\begin{Rem}
\label{rem2:additivity}Definition \ref{def2:additivity} and Remark \ref%
{rem1:additivity} imply that 
\begin{equation*}
E\in \mathbb{E}_{{a}}(T)\Longleftrightarrow S(h_{E})\not=\emptyset
\Longleftrightarrow h_{E}\in S(h_{E}).
\end{equation*}%
Indeed, if $S(h_{E})\not=\emptyset $ then there exists $g\in \mathcal{H} (T)$
such that $h_{E}\geq g\geq g^{\ast }\circ i$. This implies that 
\begin{equation*}
{h_E}^{\ast }\circ i{_{E}}\leq g^{\ast }\circ i\leq g\leq h_{E},
\end{equation*}%
so $h_{E}\in \mathcal{H} _{{a}}(T)$ by Remark \ref{rem1:additivity}. In this
situation, $h_{E}\in S(h_{E})$.
\end{Rem}

It was observed in \cite{SV} that additivity, as a property of the graph,
can be maximal with respect to inclusion. We recall next this maximality
property, an introduce the relation of \emph{\ mutual additivity} between
enlargements as well.

\begin{Def}
Let $T:X\rightrightarrows X^{\ast }$ be maximally monotone.

\begin{itemize}
\item[(a)] We say that $E\in \mathbb{E}_{{a}}(T)$ is \emph{maximally additive%
} (or \emph{max-add}, for short), if, whenever there exists $E^{\prime }\in 
\mathbb{E}_{{a}}(T)$ such that 
\begin{equation*}
E(\epsilon ,x)\subset E^{\prime }(\epsilon ,x),\forall \,\epsilon \geq
0,\,x\in X,
\end{equation*}%
then we must have $E=E^{\prime }$.

\item[(b)] Let $E,E^{\prime }\in \mathbb{E}(T)$. We say that $E$ and $%
E^{\prime }$ are \emph{\ mutually additive}, if for all $\epsilon ,\eta \geq
0,$ $x,y\in X,$ ${x^{\ast }}\in E(\epsilon ,x)$ and ${y^{\ast }}\in
E^{\prime }(\eta ,y)$ we have 
\begin{equation}
\langle x-y,x^{\ast }-y^{\ast }\rangle \geq -(\epsilon +\eta ).  \label{eq:4}
\end{equation}%
We denote this situation as $E\sim _{a}E^{\prime }$.
\end{itemize}
\end{Def}

\begin{Rem}
\label{rem:add1}Note that $E$ is additive if and only if $E\sim _{{a}}E$.
Note also that the relation $\sim _{a}$ is symmetric.
\end{Rem}

\begin{Rem}
\label{rem:add2}Take $h\in \mathcal{H}(T)$ such that $h\geq h^{\ast }\circ i$%
. Theorem 2.4 in \cite{S03} proves that 
\begin{equation}
h_{0}\in S(h)\hbox{ is {minimal} in }S(h)\hbox{ if and only if }h_{0}^{\ast
}\circ i=h_{0}.  \label{eq:7}
\end{equation}%
In other words, minimal elements of $S(h)$ are autoconjugate convex
representations of $T$. We will see that the latter property characterizes
max-add enlargements.
\end{Rem}

\begin{Rem}
For $T=\partial f$, it was proved by Svaiter \cite{SV} that the $\epsilon $%
-subdifferential is max-add. For an arbitrary $T$, it was proved in \cite{SV}
that the smallest enlargement of $T$ is always additive, and the existence
of a max-add enlargement was deduced in \cite{SV} using Zorn's lemma. On the
other hand, additivity does not necessarily hold for $T^{\mathrm{BE}}$, the
biggest enlargement of $T$. More precisely, the following weaker inequality
was established by Burachik and Svaiter, see \cite{BS99}.
\end{Rem}

\begin{Thm}
Let $H$ be a Hilbert space, $T:H\rightrightarrows H$ be maximally monotone,
and $\epsilon ,\eta \geq 0$. Then, 
\begin{equation*}
\langle x-y,x^{\ast }-y^{\ast }\rangle \geq -\left( \sqrt{\epsilon }+\sqrt{%
\eta }\right) ^{2}\quad \forall {x^{\ast }}\in T^{\mathrm{BE}}(\epsilon ,x),{%
y^{\ast }}\in T^{\mathrm{BE}}({\eta },y).
\end{equation*}
\end{Thm}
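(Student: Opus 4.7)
The plan is to deduce the bound directly from the transportation formula $(E_{3})$ of Definition~\ref{def:enl-fam} applied to the enlargement $T^{\mathrm{BE}}$, which belongs to $\mathbb{E}(T)$ by Remark~\ref{R1}. The key observation is that the requirement $\epsilon \geq 0$ appearing in $(E_{3})$ places a constraint on $\langle x-y,x^{\ast}-y^{\ast}\rangle$, and optimizing this constraint over the convex-combination parameter yields exactly the bound claimed. In particular, the proof will be Hilbert-space-agnostic and in fact works in any reflexive Banach space.

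First I would fix $x^{\ast}\in T^{\mathrm{BE}}(\epsilon,x)$ and $y^{\ast}\in T^{\mathrm{BE}}(\eta,y)$, and invoke $(E_{3})$ with $(x_{1},x_{1}^{\ast},\epsilon_{1})=(x,x^{\ast},\epsilon)$, $(x_{2},x_{2}^{\ast},\epsilon_{2})=(y,y^{\ast},\eta)$, and arbitrary $\alpha\in[0,1]$. The formula produces the scalar
\[
\hat{\epsilon}(\alpha):=\alpha\epsilon+(1-\alpha)\eta+\alpha(1-\alpha)\langle x-y,x^{\ast}-y^{\ast}\rangle,
\]
and membership of $T^{\mathrm{BE}}$ in $\mathbb{E}(T)$ forces $\hat{\epsilon}(\alpha)\geq 0$ for every $\alpha\in[0,1]$. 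Setting $\Delta:=-\langle x-y,x^{\ast}-y^{\ast}\rangle$, the claim reduces to $\Delta\leq(\sqrt{\epsilon}+\sqrt{\eta})^{2}$, and I may assume $\Delta>0$, since otherwise the conclusion is immediate.

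Second, I would minimise the quadratic $\alpha\mapsto\hat{\epsilon}(\alpha)=\Delta\,\alpha^{2}+(\epsilon-\eta-\Delta)\alpha+\eta$ over $[0,1]$. Its unconstrained minimiser is $\alpha^{\ast}=(\Delta+\eta-\epsilon)/(2\Delta)$. If $|\epsilon-\eta|>\Delta$ then $\alpha^{\ast}\notin[0,1]$, and the minimum on $[0,1]$ is attained at an endpoint with value $\epsilon$ or $\eta$, giving no new information; but then $\Delta<|\epsilon-\eta|\leq(\sqrt{\epsilon}+\sqrt{\eta})^{2}$ already. Otherwise $\alpha^{\ast}\in[0,1]$, and a direct calculation (expand, collect, recognise the quadratic) gives
\[
\hat{\epsilon}(\alpha^{\ast})=-\frac{\bigl(\Delta-(\sqrt{\epsilon}+\sqrt{\eta})^{2}\bigr)\bigl(\Delta-(\sqrt{\epsilon}-\sqrt{\eta})^{2}\bigr)}{4\Delta}.
\]
Since in this regime $\Delta\geq|\epsilon-\eta|=|\sqrt{\epsilon}-\sqrt{\eta}|(\sqrt{\epsilon}+\sqrt{\eta})\geq(\sqrt{\epsilon}-\sqrt{\eta})^{2}$, the second factor in the numerator is non-negative, so $\hat{\epsilon}(\alpha^{\ast})\geq 0$ is equivalent to $\Delta\leq(\sqrt{\epsilon}+\sqrt{\eta})^{2}$, which is exactly the desired inequality.

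The only real obstacle is the algebraic step of recognising $\Delta^{2}-2(\epsilon+\eta)\Delta+(\epsilon-\eta)^{2}$ as $\bigl(\Delta-(\sqrt{\epsilon}+\sqrt{\eta})^{2}\bigr)\bigl(\Delta-(\sqrt{\epsilon}-\sqrt{\eta})^{2}\bigr)$, which one sees by computing the discriminant $4(\epsilon+\eta)^{2}-4(\epsilon-\eta)^{2}=16\epsilon\eta$; everything else is a mechanical use of already-established machinery, namely $(E_{3})$ for $T^{\mathrm{BE}}$.
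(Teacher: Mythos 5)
Your proof is correct. Note that the paper itself offers no proof of this statement --- it is quoted from the literature (Burachik--Svaiter \cite{BS99}) --- so there is nothing to compare against line by line; but your argument is exactly the standard one and is fully self-contained within the paper's framework: $T^{\mathrm{BE}}\in\mathbb{E}(T)$ by Remark~\ref{R1}, so the nonnegativity of $\hat{\epsilon}(\alpha)$ in $(E_{3})$ holds for every $\alpha\in[0,1]$, and minimising the resulting convex quadratic yields the bound. Your case analysis is sound (the only cosmetic looseness is the word ``equivalent'' when the factor $\Delta-(\sqrt{\epsilon}-\sqrt{\eta})^{2}$ vanishes, but then $\Delta=(\sqrt{\epsilon}-\sqrt{\eta})^{2}\leq(\sqrt{\epsilon}+\sqrt{\eta})^{2}$ anyway), and your observation that the argument needs no Hilbert structure is accurate. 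If you want to avoid the endpoint/interior case split entirely, take $\alpha=\sqrt{\eta}/(\sqrt{\epsilon}+\sqrt{\eta})$ when $\epsilon\eta>0$: then $\alpha\epsilon+(1-\alpha)\eta=\sqrt{\epsilon\eta}$ and $\alpha(1-\alpha)=\sqrt{\epsilon\eta}/(\sqrt{\epsilon}+\sqrt{\eta})^{2}$, so $\hat{\epsilon}(\alpha)\geq 0$ gives $\Delta\leq(\sqrt{\epsilon}+\sqrt{\eta})^{2}$ at once, while the degenerate case $\eta=0$ follows directly from \eqref{teps} since $y^{\ast}\in T^{\mathrm{BE}}(0,y)=T(y)$.
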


The following result is independent from the enlargement $\breve{T}_h$ and
is interesting in its own right.

\begin{Pro}
\label{pro:MA} Let $T:X\rightrightarrows X^{\ast }$ be maximally monotone
and $E,E^{\prime }\in \mathbb{E}(T)$. Assume that $h,h^{\prime }\in \mathcal{%
H}(T)$ are such that $E=L^{h}$ and $E^{\prime}=L^{h^\prime }$. The following
hold.

\begin{itemize}
\item[(i)] $E\sim _{a}E^{\prime }$ if and only if $h^{\ast }\circ i\leq
h^{\prime }$. In particular, $E$ is additive if and only if $h^{\ast }\circ
i\leq h$;

\item[(ii)] $h=h^{\ast }\circ i\,\,$ if and only if $L^{h}$ is max-add.
\end{itemize}

In particular, $E$ is mutually additive with $L^{h^{\ast }\circ i}$.
Inasmuch $L^{h^{\ast }\circ i}$ is the largest enlargement which is mutually
additive with $L^{h}$, it can thus be seen as the \textquotedblleft additive
complement\textquotedblright\ of $L^{h}$. Moreover, max-add enlargements are
characterized by the fact that they coincide with their additive complement.
\end{Pro}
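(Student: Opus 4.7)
The plan is to push everything through the bijection $h \leftrightarrow L^h$ from Remark \ref{rem:Lh=E}, exploiting the defining equivalence $x^* \in L^h(\epsilon, x) \iff h(x, x^*) \leq \langle x, x^*\rangle + \epsilon$ and its immediate consequence that this bijection is order-reversing. For part (i), I would rewrite \eqref{eq:4} by choosing, for each fixed pair $(x, x^*), (y, y^*) \in X \times X^*$, the smallest admissible slacks $\epsilon := h(x, x^*) - \langle x, x^*\rangle$ and $\eta := h'(y, y^*) - \langle y, y^*\rangle$ (nonnegative by \eqref{eq:defh1}, with $+\infty$ values rendering the corresponding mutual-additivity constraint vacuous). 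A short algebraic manipulation turns $E \sim_a E'$ into
\begin{equation*}
h(x, x^*) + h'(y, y^*) \geq \langle x, y^*\rangle + \langle y, x^*\rangle = \langle (x, x^*), (y^*, y)\rangle
\end{equation*}
for all $(x, x^*), (y, y^*)$. Fixing $(y, y^*)$ and taking the supremum over $(x, x^*)$ yields $h^*(y^*, y) \leq h'(y, y^*)$, i.e., $h^* \circ i \leq h'$. Conversely, Fenchel--Young applied to $h$ together with $h^* \circ i \leq h'$ recovers the displayed inequality, and the ``in particular'' statement is the case $h = h'$.

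For part (ii), the ``$\Leftarrow$'' direction is a short order chase: if $h = h^* \circ i$ and $L^{h'} \supset L^h$ is additive, then $h' \leq h$ by the order-reversing correspondence, so $(h')^* \circ i \geq h^* \circ i = h$; combined with $(h')^* \circ i \leq h'$ (additivity via (i)), this forces $h \leq h'$, hence $h' = h$. For ``$\Rightarrow$'', assume $L^h$ is max-add, so $L^h$ is additive and $h^* \circ i \leq h$ by (i). For any $g \in S(h)$ (Definition \ref{def2:additivity}) we have $g \in \mathcal{H}(T)$, $g \leq h$, and $g^* \circ i \leq g$; hence $L^g \supset L^h$ with $L^g$ additive, and max-additivity forces $L^g = L^h$, i.e., $g = h$. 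Thus $h$ is the unique, and so minimal, element of $S(h)$, and Remark \ref{rem:add2} gives $h^* \circ i = h$.

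Finally, $E \sim_a L^{h^* \circ i}$ follows from (i) with $h' := h^* \circ i$, which lies in $\mathcal{H}(T)$ by Remark \ref{rem:BS}; every $L^{h'}$ mutually additive with $L^h$ corresponds via (i) to some $h' \geq h^* \circ i$, so by order-reversing the largest such enlargement is $L^{h^* \circ i}$. The closing claim that max-add enlargements are exactly those coinciding with their additive complement is (ii) rephrased as $L^h = L^{h^* \circ i} \iff h = h^* \circ i$. I expect the main subtlety to be the ``$\Rightarrow$'' direction of (ii), which requires invoking the nontrivial Remark \ref{rem:add2} (Theorem 2.4 of \cite{S03}) to pass from minimality in $S(h)$ to autoconjugacy.
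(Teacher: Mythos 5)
Your proposal is correct and follows essentially the same route as the paper: for (i), the choice of minimal slacks $\epsilon=h(x,x^{\ast})-\langle x,x^{\ast}\rangle$, $\eta=h^{\prime}(y,y^{\ast})-\langle y,y^{\ast}\rangle$ combined with taking suprema and Fenchel--Young; for (ii), the identification of max-additivity with minimality of $h$ in $S(h)$ and the appeal to \eqref{eq:7} (Theorem 2.4 of \cite{S03}). The only noteworthy variation is that your order chase for the direction ``$h=h^{\ast}\circ i\Rightarrow L^{h}$ max-add'' is self-contained, whereas the paper routes that implication through \eqref{eq:7} as well; both arguments are valid.
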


\begin{proof}
(i) Assume that (\ref{eq:4}) holds for all ${x^{\ast }}\in E(\epsilon
,x)=L^{h}(\epsilon ,x)$ and all $y^{\ast }\in E^{\prime }(\eta
,y)=L^{h^{\prime }}(\eta ,y)$. For every $(x,{x^{\ast }})\in \mathrm{dom}%
\left( h\right) ,$ set $\epsilon :=h(x,{x^{\ast }})-\left\langle x,x^{\ast
}\right\rangle \geq 0$. Similarly, for $(y,y^{\ast })\in \mathrm{dom}\left(
h^{\prime }\right) ,$ set $\eta :=h^{\prime }(y,y^{\ast })-\left\langle
y,y^{\ast }\right\rangle \geq 0$. Using \eqref{eq:4}, we obtain%
\begin{eqnarray*}
\left\langle (y,y^{\ast }),({x^{\ast }},x)\right\rangle -h^{\prime
}(y,y^{\ast }) &=&\left\langle y,x^{\ast }\right\rangle +\left\langle
x,y^{\ast }\right\rangle -\left\langle y,y^{\ast }\right\rangle -\eta \\
&=&\left\langle x,x^{\ast }\right\rangle -\left\langle x-y,x^{\ast }-y^{\ast
}\right\rangle -\eta \leq \left\langle x,x^{\ast }\right\rangle +\epsilon \\
&=&h(x,{x^{\ast }}).
\end{eqnarray*}%
Since $(y,y^{\ast })\in \mathrm{dom}\left( h^{\prime }\right) $ is
arbitrary, we can take supremum in the left hand side to obtain $h^{\prime
\ast }({x^{\ast }},x)\leq h(x,{x^{\ast }}),$ which, taking conjugates,
yields $h^{\ast }\circ i\leq h^{\prime }$. Conversely, assume that $h^{\ast
}\circ i\leq h^{\prime }$. Take ${x^{\ast }}\in E(\epsilon
,x)=L^{h}(\epsilon ,x)$ and ${y^{\ast }}\in E^{\prime }(\eta
,y)=L^{h^{\prime }}(\eta ,y)$. Using the assumption, together with these
inclusions and Fenchel-Young inequality, we get%
\begin{eqnarray*}
&&\left\langle (x,x^{\ast }),(y^{\ast },y)\right\rangle \\
&\leq &h(x,x^{\ast })+\left( h^{\ast }\circ i\right) (y,y^{\ast }) \\
&\leq &h(x,x^{\ast })+h^{\prime \ast }(y,y^{\ast })\leq \left\langle
x,x^{\ast }\right\rangle +\left\langle y,y^{\ast }\right\rangle +\epsilon
+\eta .
\end{eqnarray*}%
Re-arranging the left-most and right-most expressions we obtain \eqref{eq:4}%
. The last statement follows by taking $E^{\prime }=E$ in (i).\newline
(ii) The proof is based on Remark \ref{rem:add2}. Indeed, consider the set $%
S(h)$ given in Definition \ref{def2:additivity}. We claim that $L^{h}$ is
max-add if and only if $h\in S(h)$ and $h$ is minimal in $S(h)$. If the
claim is true, then Remark \ref{rem:add2} readily gives $h^{\ast }\circ i=h$%
. Let us proceed to prove the claim. Indeed, assume first that $L^{h}$ is
max-add. By Remark \ref{rem2:additivity}, we have $h\in S(h)$. It remains to
show that $h$ is minimal in $S(h)$. Let $h^{\prime }\in S(h)$ be such that $%
h^{\prime }\leq h$. We must show that $h^{\prime }=h$. Since $h^{\prime }\in
S(h),$ we have $h^{\prime \ast }\circ i\leq h^{\prime },$ and hence $%
L^{h^{\prime }}$ is additive by Remark \ref{rem2:additivity}. Since $%
h^{\prime }\leq h,$ we have that $L^{h}(\epsilon ,x)\subset L^{h^{\prime
}}(\epsilon ,x)$ for all $\epsilon \geq 0$ and all $x\in X$. Using now the
fact that $L^{h}$ is max-add and $L^{h^{\prime }}$ is additive, we conclude
that $L^{h}=L^{h^{\prime }}$. Given any enlargement $E\in \mathbb{E}(T)$,
the map from $E$ to $h_{E}$ is a bijection. This fact, together with the
equality $L^{h}=L^{h^{\prime }}$, allows us to conclude that $h=h^{\prime }$%
. Hence $h$ is minimal in $S(h)$. Conversely, assume that $h\in S(h)$ and
that $h$ is a minimal element of $S(h)$. Let $h^{\prime }\in \mathcal{H}%
_{a}(T)$ be such that $L^{h}(\epsilon ,x)\subset L^{h^{\prime }}(\epsilon
,x) $ for all $\epsilon \geq 0$ and all $x\in X$. This implies that $%
h^{\prime }\leq h$;\ indeed, if $(x,x^{\ast })\in \mathrm{dom}\left(
h\right) $ then, setting $\epsilon :=h(x,{x^{\ast }})-\left\langle x,x^{\ast
}\right\rangle \geq 0,$ we have $(x,x^{\ast })\in L^{h}(\epsilon ,x)$ and
hence $(x,x^{\ast })\in L^{h^{\prime }}(\epsilon ,x),$ that is,%
\begin{equation*}
h^{\prime }\left( x,x^{\ast }\right) \leq \left\langle x,x^{\ast
}\right\rangle +\epsilon =h(x,{x^{\ast }}).
\end{equation*}%
Moreover, since $h^{\prime }\in \mathcal{H}_{a}(T),$ by Remark \ref%
{rem1:additivity} we have that $h^{\prime \ast }\circ i\leq h^{\prime },$
and hence $h^{\prime }\in S(h)$. The minimality of $h$ now implies that $%
h=h^{\prime }$. In other words, $L^{h}=L^{h^{\prime }}$ and therefore $L^{h}$
is max-add. This completes the proof of the claim. As mentioned above, now
(ii) follows directly from the claim and Remark \ref{rem:add2}. The fact
that $E$ is mutually additive with $L^{h^{\ast }\circ i}$ follows by taking $%
h^{\prime }:=h^{\ast }\circ i$ in part (i). By (i), $L^{h^{\ast }\circ i}$
is the largest of all enlargements mutually additive with $E$. By (ii), $E$
is max add if and only if $h=h^{\ast }\circ i$. Equivalently, $%
L^{h}=L^{h^{\ast }\circ i}$ and hence $E$ coincides with its additive
complement. This completes the proof. 
\end{proof}

\begin{Rem}
Using Zorn's lemma, it was proved in \cite{S03} that there exists $h\in 
\mathcal{H}(T)$ such that $h^{\ast }\circ i=h$, and hence there are max-add
elements in the family $\mathbb{E}(T)$. Other non-constructive examples of
autoconjugate convex representations of $T$ can be found in \cite%
{penot03,penot}. It is then natural to ask for a constructive example.
Indeed, in the case when we are provided with a convex representation $h$ of 
$T$ whose domain coincides with the domain of $h^{\ast }\circ i,$ we can
constructively obtain both a max-add enlargement by means of Corollary \ref%
{cor:Th} below and an autoconjugate convex representation of $T$ (however
the coincidence of those domains is an essential condition for having such a
possibility, as Example \ref{not auto} shows). Such convex representations
can be found in \cite{PZ} and \cite{BWY10}. The one found in \cite{PZ}
requires a mild constraint qualification, namely, that the affine hull of
the domain of $T$ is closed. The other ones do not require any constraint
qualification. Corollary \ref{cor:JE} gives an alternative non-constructive
proof of the existence of autoconjugate convex representations in the case
of operators for which a suitable convex representation is available.
\end{Rem}

It was shown in \cite{BS02} that there is a largest element in the family $%
\mathcal{H}(T)$, which we denote here by $\sigma _{T}$. It is shown in \cite%
{BS02} that $\sigma _{T}=\mathrm{cl}\,\mathrm{conv}(\pi +\delta _{G(T)})$,
where the notation $\delta _{G(T)}$ is used for the indicator function of
the graph of $T$. Moreover, according to \cite[eq. $(9)$]{BS02} we have $%
\sigma _{T}=\left( \mathcal{F}_{T}\right) ^{\ast }\circ i$, and this
function characterizes the smallest enlargement, i.e., $T^{SE}=L^{\sigma
_{T}}$.

We recover a result from \cite{SV} as a corollary of Proposition \ref{pro:MA}%
.

\begin{Cor}
\label{mut add}The biggest enlargement $T^{\mathrm{BE}}$ and the smallest
enlargement $T^{\mathrm{SE}}$ are mutually additive.
\end{Cor}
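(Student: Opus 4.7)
The plan is to apply Proposition \ref{pro:MA}(i) directly, using the explicit identifications of $T^{\mathrm{BE}}$ and $T^{\mathrm{SE}}$ in terms of the family $\mathcal{H}(T)$ recorded just before the corollary. Concretely, I would write $T^{\mathrm{BE}} = L^{\mathcal{F}_{T}}$ (the characterization of $T^{\mathrm{BE}}$ via $\mathcal{F}_{T}$ given in Section~2.2) and $T^{\mathrm{SE}} = L^{\sigma_{T}}$, where $\sigma_{T}$ is the largest element of $\mathcal{H}(T)$ and satisfies the key identity $\sigma_{T} = \left(\mathcal{F}_{T}\right)^{\ast}\circ i$ recalled immediately above the corollary.

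With these identifications in hand, mutual additivity is equivalent, by Proposition \ref{pro:MA}(i) applied to $h := \mathcal{F}_{T}$ and $h' := \sigma_{T}$, to the inequality
\begin{equation*}
h^{\ast}\circ i \;\leq\; h',\qquad\text{i.e.,}\qquad \left(\mathcal{F}_{T}\right)^{\ast}\circ i \;\leq\; \sigma_{T}.
\end{equation*}
But the right-hand side is, by definition, exactly $\left(\mathcal{F}_{T}\right)^{\ast}\circ i$, so the required inequality holds as an equality. This closes the argument.

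No step is really an obstacle here: the entire content of the corollary is packaged into the two facts (a) $\sigma_{T} = (\mathcal{F}_{T})^{\ast}\circ i$, and (b) the characterization of mutual additivity in Proposition \ref{pro:MA}(i). The only thing to double-check when writing the proof is that we are correctly associating each of $T^{\mathrm{BE}}$ and $T^{\mathrm{SE}}$ with the right element of $\mathcal{H}(T)$ (biggest enlargement corresponds to the smallest convex representation, and vice versa), which is the content of the correspondence $E\longleftrightarrow h_{E}$ from Definition \ref{def:hE} and Remark \ref{rem:Lh=E}. Once these identifications are made explicit, the conclusion is immediate.
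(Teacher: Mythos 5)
Your proof is correct and follows exactly the paper's own route: identify $T^{\mathrm{BE}}=L^{\mathcal{F}_{T}}$ and $T^{\mathrm{SE}}=L^{\sigma_{T}}$, invoke $\sigma_{T}=\left(\mathcal{F}_{T}\right)^{\ast}\circ i$, and apply Proposition \ref{pro:MA}(i); your version merely spells out the final inequality check that the paper leaves implicit.
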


\begin{proof}
This follows from the fact that $T^{\mathrm{BE}}=L^{\mathcal{F}_{T}}$ and $%
T^{\mathrm{SE}}=L^{\sigma _{T}},$ together with the equality $\sigma
_{T}=\left( \mathcal{F}_{T}\right) ^{\ast }\circ i$.
\end{proof}

\bigskip

We next show that all members of our family are additive.

\begin{Cor}
\label{cor:3.22} \label{cor:Th} For every $h\in \mathcal{H}(T)$ we have that 
$\breve{T}_{h}$ is additive. The enlargement $\breve{T}_{h}$ is max-add if
and only if $\left( \mathcal{A}h\right) ^{\ast }\circ i=\mathcal{A}{h}$.
Consequently, if $h^{\ast }\circ i=h$ then $\breve{T}_{h}$ is max-add.
\end{Cor}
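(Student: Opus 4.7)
The proof is essentially a bookkeeping exercise that combines Proposition \ref{pro:Lh} with Proposition \ref{pro:MA} and part (i) of Theorem \ref{theo:JE}; no new technical work is needed. The plan is to pass from $\breve{T}_h$ to the representation $\mathcal{A}h$ and then read off additivity and max-additivity from the properties already established for the operator $\mathcal{A}$.

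First, by Proposition \ref{pro:Lh} we have $\breve{T}_{h} = L^{\mathcal{A}h}$, so every question about $\breve{T}_h$ becomes a question about the convex representation $\mathcal{A}h \in \mathcal{H}(T)$. For additivity, I invoke Theorem \ref{theo:JE}(i), which tells us that $\mathcal{A}$ maps $\mathcal{H}$ into $\mathcal{H}_{\ast\leq}$; in particular
\[
(\mathcal{A}h)^{\ast}\circ i \;\leq\; \mathcal{A}h.
\]
Combined with the characterization in Proposition \ref{pro:MA}(i) (an enlargement $L^{g}$ is additive iff $g^{\ast}\circ i \leq g$), this immediately yields that $L^{\mathcal{A}h} = \breve{T}_h$ is additive.

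For the max-add characterization, I apply Proposition \ref{pro:MA}(ii) to the function $\mathcal{A}h$: the enlargement $L^{\mathcal{A}h}$ is max-add if and only if $\mathcal{A}h = (\mathcal{A}h)^{\ast}\circ i$. Transcribing via $\breve{T}_h = L^{\mathcal{A}h}$, this is exactly the stated criterion.

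Finally, for the last sentence, suppose $h^{\ast}\circ i = h$. Then from the definition \eqref{eq:61},
\[
\mathcal{A}h = \tfrac{1}{2}\bigl(h + h^{\ast}\circ i\bigr) = h,
\]
so $(\mathcal{A}h)^{\ast}\circ i = h^{\ast}\circ i = h = \mathcal{A}h$, and the preceding characterization shows that $\breve{T}_h$ is max-add. No step presents a genuine obstacle; the only care needed is to keep straight that Proposition \ref{pro:MA} is being applied to $\mathcal{A}h$ rather than to $h$ itself, which is why the autoconjugacy condition appears on $\mathcal{A}h$ and not on $h$ in the second assertion.
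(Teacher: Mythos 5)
Your proposal is correct and follows essentially the same route as the paper: reduce to $\breve{T}_h=L^{\mathcal{A}h}$ via Proposition \ref{pro:Lh}, get additivity from Theorem \ref{theo:JE}(i) together with Proposition \ref{pro:MA}(i), obtain the max-add characterization from Proposition \ref{pro:MA}(ii) applied to $\mathcal{A}h$, and derive the final assertion from $\mathcal{A}h=h$ when $h^{\ast}\circ i=h$. No discrepancies with the paper's argument.
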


\begin{proof}
Recall (Proposition \ref{pro:Lh}) that 
\begin{equation}
\breve{T}_{h}=L^{\mathcal{A}h}.  \label{eq:11}
\end{equation}%
By Proposition \ref{pro:MA}(i), it is enough to prove that $\left( {\mathcal{%
A}{h}}\right) ^{\ast }\circ i\leq \mathcal{A}h$, which is precisely the
conclusion of the first assertion in Theorem \ref{theo:JE}(i). This proves
the first statement. The second statement follows from (\ref{eq:11}) and
Proposition \ref{pro:MA}(ii). 
If $h={h}^{\ast }\circ i$ then we have $\mathcal{A}h=\left( {\mathcal{A}{h}}%
\right) ^{\ast }\circ i$. Indeed, if $h={h}^{\ast }\circ i$ it is direct to
check that $\mathcal{A}h=h$. So $\left( {\mathcal{A}{h}}\right) ^{\ast
}\circ i={h}^{\ast }\circ i=h=\mathcal{A}h$. By (\ref{eq:11}) and
Proposition \ref{pro:MA}(ii), we conclude that $\breve{T}_{h}$ is max-add.
\end{proof}

\bigskip

A consequence of the above results is that, besides the smallest
enlargement, a whole subfamily of enlargements happens to be additive. If
they derive from an autoconjugate $h$, then they are max-add and hence they
can be regarded as \textquotedblleft structurally closer\textquotedblright\
to the epsilon subdifferential. We will see in the next section that a
particular member of this subfamily is precisely the $\epsilon $%
-subdifferential when $T=\partial f$.


\section{The case $T:=\partial f$}

We want now to establish the relation between our new enlargement and the $%
\epsilon $-subdifferential in the case $T:=\partial f.$

\begin{Lem}
\label{h leq FY}Let $f:X\rightarrow \mathbb{R}\cup \{+\infty \}$ be a
lower semicontinuous proper convex function, and let $h\in \mathcal{H}(\partial f)$ be
such that $h\leq f^{FY}.$ Then, for every $(x,x^{\ast })\in X\times X^{\ast
},$ one has 
\begin{equation*}
h^{FY}\left( (x,x^{\ast }),(x^{\ast },x)\right) \geq \left\langle x,x^{\ast
}\right\rangle +f^{FY}(x,x^{\ast }).
\end{equation*}
\end{Lem}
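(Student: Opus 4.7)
The plan is to unpack the Fenchel--Young function on the left, use the monotonicity of the conjugation with respect to pointwise inequality, and exploit the fact (Remark \ref{FY auto}) that $f^{FY}$ is autoconjugate. By definition,
\begin{equation*}
h^{FY}\bigl((x,x^{\ast }),(x^{\ast },x)\bigr)=h(x,x^{\ast })+h^{\ast }(x^{\ast },x),
\end{equation*}
so the target inequality splits naturally into two pieces: bounding $h(x,x^{\ast })$ from below by $\langle x,x^{\ast }\rangle$, and bounding $h^{\ast }(x^{\ast },x)$ from below by $f^{FY}(x,x^{\ast })$.

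First, since $h\in \mathcal{H}(\partial f)$, condition \eqref{eq:defh1} immediately gives $h(x,x^{\ast })\geq \langle x,x^{\ast }\rangle $ for every $(x,x^{\ast })$. Second, from the assumption $h\leq f^{FY}$, the order-reversing property of the conjugation operator yields $h^{\ast }\geq (f^{FY})^{\ast }$ (as functions on $X^{\ast }\times X$). The next, and only mildly computational, step is to verify that $(f^{FY})^{\ast }(x^{\ast },x)=f^{FY}(x,x^{\ast })$. This is straightforward using the dual pairing $\langle (x,x^{\ast }),(y^{\ast },y)\rangle =\langle x,y^{\ast }\rangle +\langle y,x^{\ast }\rangle $:
\begin{equation*}
(f^{FY})^{\ast }(x^{\ast },x)=\sup_{(u,u^{\ast })}\bigl\{\langle u,x^{\ast }\rangle +\langle x,u^{\ast }\rangle -f(u)-f^{\ast }(u^{\ast })\bigr\}=f^{\ast }(x^{\ast })+f^{\ast \ast }(x),
\end{equation*}
and the lower semicontinuity of the proper convex $f$ forces $f^{\ast \ast }=f$, giving $f^{\ast }(x^{\ast })+f(x)=f^{FY}(x,x^{\ast })$. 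This is exactly the statement of Remark \ref{FY auto} interpreted in the appropriate variables.

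Adding the two lower bounds yields
\begin{equation*}
h(x,x^{\ast })+h^{\ast }(x^{\ast },x)\geq \langle x,x^{\ast }\rangle +f^{FY}(x,x^{\ast }),
\end{equation*}
which is the required inequality. There is no real obstacle here: the proof is essentially a two-line consequence of the defining inequality of $\mathcal{H}(\partial f)$ combined with the autoconjugacy of $f^{FY}$; the only point requiring a moment of care is keeping track of which variable lives in $X$ and which in $X^{\ast }$ when taking conjugates through the permutation $i$.
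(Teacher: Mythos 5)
Your proof is correct and follows essentially the same route as the paper's: both split $h^{FY}((x,x^{\ast}),(x^{\ast},x))$ into $h(x,x^{\ast})+h^{\ast}(x^{\ast},x)$, bound the first term by $\langle x,x^{\ast}\rangle$ via the defining inequality of $\mathcal{H}(\partial f)$, and bound the second via the order-reversing property of conjugation together with the autoconjugacy $(f^{FY})^{\ast}\circ i=f^{FY}$. The only difference is that you verify the autoconjugacy of $f^{FY}$ by direct computation, whereas the paper simply cites it.
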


\begin{proof}
From the inequality $h\leq f^{FY}$ it follows that $h^{\ast }\geq \left(
f^{FY}\right) ^{\ast }=f^{FY}\circ i;$\ hence, using that $h(x,x^{\ast
})\geq \left\langle x,x^{\ast }\right\rangle ,$ we obtain%
\begin{equation*}
h^{FY}\left( (x,x^{\ast }),(x^{\ast },x)\right) =h(x,x^{\ast })+h^{\ast
}(x^{\ast },x)\geq \left\langle x,x^{\ast }\right\rangle +f^{FY}(x,x^{\ast
}).
\end{equation*}
\end{proof}

\begin{Thm}
\label{Theo1} Let $f$ and $h$ be as in Lemma \ref{h leq FY}$.$ Then, for
every $\epsilon >0$ and $x\in X,$ one has%
\begin{equation*}
\breve{T}_{h}(\frac{\epsilon }{2},x)\subset \breve{{\partial }}f(\epsilon ,x)
\end{equation*}
\end{Thm}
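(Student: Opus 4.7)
My plan is to unwind the definitions so that both sides of the desired inclusion become statements about the size of $f^{FY}(x,x^{\ast})$ and $h(x,x^{\ast}) + h^{\ast}(x^{\ast},x)$ relative to $\langle x,x^{\ast}\rangle$, and then to invoke Lemma~\ref{h leq FY} to bridge the two. Specifically, by Proposition~\ref{pro:Lh} we have $\breve{T}_h = L^{\mathcal{A}h}$, so $x^{\ast}\in \breve{T}_h(\epsilon/2,x)$ is equivalent to $\mathcal{A}h(x,x^{\ast})\leq \langle x,x^{\ast}\rangle+\epsilon/2$, i.e.\
\begin{equation*}
h(x,x^{\ast})+h^{\ast}(x^{\ast},x) \leq 2\langle x,x^{\ast}\rangle+\epsilon. \qquad(\ast)
\end{equation*}
Equivalently, one can read $(\ast)$ directly from Definition~\ref{Th:def} using \eqref{eq:10} applied to $h$ at the point $(x,x^{\ast})$ with tolerance $2\cdot(\epsilon/2)=\epsilon$, noting that the dual pairing evaluated at $\left((x,x^{\ast}),(x^{\ast},x)\right)$ equals $2\langle x,x^{\ast}\rangle$.

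Having established $(\ast)$, I would then recognize its left-hand side as $h^{FY}\bigl((x,x^{\ast}),(x^{\ast},x)\bigr)$ and apply Lemma~\ref{h leq FY}, which precisely gives
\begin{equation*}
h^{FY}\bigl((x,x^{\ast}),(x^{\ast},x)\bigr) \geq \langle x,x^{\ast}\rangle + f^{FY}(x,x^{\ast}).
\end{equation*}
Combining this lower bound with the upper bound from $(\ast)$ yields
\begin{equation*}
\langle x,x^{\ast}\rangle + f^{FY}(x,x^{\ast}) \leq 2\langle x,x^{\ast}\rangle + \epsilon,
\end{equation*}
i.e.\ $f^{FY}(x,x^{\ast}) \leq \langle x,x^{\ast}\rangle + \epsilon$. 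By the characterization \eqref{eq:10} of the Br\o ndsted--Rockafellar enlargement, this is exactly the condition $x^{\ast}\in \breve{\partial}f(\epsilon,x)$, which completes the proof.

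There is essentially no genuine obstacle: the whole argument is a two-line chain of inequalities once the correct translation of the defining condition for $\breve{T}_h$ is in place. The only point requiring a bit of care is keeping track of the factor $2$: the tolerance in the $\epsilon$-subdifferential defining $\breve{T}_h$ is doubled (it is $2\epsilon$ in \eqref{eq:Th}), and the dual pairing $\langle(x,x^{\ast}),(x^{\ast},x)\rangle$ also equals $2\langle x,x^{\ast}\rangle$, which is precisely why $\epsilon/2$ on the left matches $\epsilon$ on the right. Apart from bookkeeping the factor of two, all the real work has already been done in Lemma~\ref{h leq FY} and Proposition~\ref{pro:Lh}.
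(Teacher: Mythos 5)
Your proposal is correct and follows essentially the same route as the paper: translate $x^{\ast}\in\breve{T}_h(\epsilon/2,x)$ into the inequality $2\mathcal{A}h(x,x^{\ast})=h^{FY}\bigl((x,x^{\ast}),(x^{\ast},x)\bigr)\leq 2\langle x,x^{\ast}\rangle+\epsilon$ via Proposition~\ref{pro:Lh}, then apply Lemma~\ref{h leq FY} and conclude with the characterization \eqref{eq:10}. The bookkeeping of the factor $2$ is handled correctly, and no step is missing.
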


\begin{proof}
Let $x^{\ast }\in \breve{T}_{h}(\frac{\epsilon }{2},x)=L^{\mathcal{A}h}(\frac{\epsilon }{2},x).$ Then, by Lemma \ref%
{h leq FY}, we have%
\begin{equation*}
\begin{array}{rcl}
f^{FY}(x,x^{\ast })&\leq& h^{FY}\left( (x,x^{\ast }),(x^{\ast },x)\right)
-\left\langle x,x^{\ast }\right\rangle \\
&&\\
&=& 2 \mathcal{A}h(x,x^{\ast}) -\left\langle x,x^{\ast }\right\rangle
\\
&&\\
&\leq& 2\left( \left\langle x,x^{\ast
}\right\rangle +\frac{\epsilon }{2}\right) -\left\langle x,x^{\ast
}\right\rangle =\left\langle x,x^{\ast }\right\rangle +\epsilon,
\end{array}
\end{equation*}%
where we used the definition of $\mathcal{A}$ in the first equality, and the
assumption on $x^{\ast}$ in the second inequality. This shows that $x^{\ast }\in \breve{{\partial }}f(\epsilon ,x).$ 
\end{proof}

\begin{Rem}
\label{rem2}As observed in Remark \ref{R0}, when $\epsilon =0$ in Theorem %
\ref{Theo1}(ii) we recover the equality $T_{h}=\partial f,$ proved in \cite[%
Example 2.3]{F}.
\end{Rem}

When $h:={{\mathcal{F}}}_{\partial f}$, we can strengthen the inclusion in
Theorem \ref{Theo1}:

\begin{Pro}
\label{pro4.3} Let $f:X\rightarrow \mathbb{R}\cup \{+\infty \}$ be a  lower semicontinuous 
proper convex function. Then, for every $\epsilon \geq 0$ and $x\in X,$ we
have 
\begin{equation*}
\breve{T}_{{\mathcal{F}}_{\partial f}}(\frac{\epsilon }{2},x)\subset T^{%
\mathrm{SE}}(\epsilon ,x).
\end{equation*}
\end{Pro}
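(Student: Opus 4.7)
The plan is to translate both enlargements into statements about convex representations and then chase the inequalities. By Proposition~3.7, $\breve{T}_{\mathcal{F}_{\partial f}} = L^{\mathcal{A}\mathcal{F}_{\partial f}}$, where
\[
\mathcal{A}\mathcal{F}_{\partial f} \;=\; \tfrac{1}{2}\bigl(\mathcal{F}_{\partial f} + \mathcal{F}_{\partial f}^{\ast}\circ i\bigr) \;=\; \tfrac{1}{2}\bigl(\mathcal{F}_{\partial f} + \sigma_{\partial f}\bigr),
\]
using the identification $\sigma_{T} = (\mathcal{F}_{T})^{\ast}\circ i$ recalled just before Corollary~3.4. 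On the other hand, $T^{\mathrm{SE}} = L^{\sigma_{\partial f}}$ by the same remark.

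So I will take $x^{\ast}\in \breve{T}_{\mathcal{F}_{\partial f}}(\epsilon/2,x) = L^{\mathcal{A}\mathcal{F}_{\partial f}}(\epsilon/2,x)$, which by definition of $L^{\mathcal{A}\mathcal{F}_{\partial f}}$ means
\[
\tfrac{1}{2}\bigl(\mathcal{F}_{\partial f}(x,x^{\ast}) + \sigma_{\partial f}(x,x^{\ast})\bigr) \;\leq\; \langle x,x^{\ast}\rangle + \tfrac{\epsilon}{2}.
\]
Multiplying by $2$ and rearranging,
\[
\sigma_{\partial f}(x,x^{\ast}) \;\leq\; 2\langle x,x^{\ast}\rangle + \epsilon - \mathcal{F}_{\partial f}(x,x^{\ast}).
\]
Finally, since $\mathcal{F}_{\partial f}\in \mathcal{H}(\partial f)$, condition \eqref{eq:defh1} gives $\mathcal{F}_{\partial f}(x,x^{\ast}) \geq \langle x,x^{\ast}\rangle$, and the previous inequality collapses to $\sigma_{\partial f}(x,x^{\ast}) \leq \langle x,x^{\ast}\rangle + \epsilon$. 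This is exactly $x^{\ast}\in L^{\sigma_{\partial f}}(\epsilon,x) = T^{\mathrm{SE}}(\epsilon,x)$, proving the inclusion.

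There is really no obstacle here: the whole argument is algebraic once the two identities $\breve{T}_h = L^{\mathcal{A}h}$ (Proposition~3.7) and $T^{\mathrm{SE}} = L^{(\mathcal{F}_T)^{\ast}\circ i}$ are combined with the Fitzpatrick lower bound $\mathcal{F}_{\partial f}(x,x^{\ast}) \geq \langle x,x^{\ast}\rangle$. The only point worth being careful about is that the factor $1/2$ appearing in the definition of $\mathcal{A}$ is precisely what forces the factor $\epsilon/2$ on the left side of the inclusion.
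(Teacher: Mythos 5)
Your proof is correct and is essentially the paper's own argument: unpack $x^{\ast}\in \breve{T}_{\mathcal{F}_{\partial f}}(\epsilon/2,x)$ via $\breve{T}_{h}=L^{\mathcal{A}h}$, drop the $\mathcal{F}_{\partial f}$ term using $\mathcal{F}_{\partial f}\geq \langle\cdot,\cdot\rangle$, and identify $L^{\mathcal{F}_{\partial f}^{\ast}\circ i}=L^{\sigma_{\partial f}}=T^{\mathrm{SE}}$. No differences worth noting.
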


\begin{proof}
Suppose that $x^{\ast }\in \breve{T}_{{\mathcal{F}}_{\partial f}}(\frac{%
\epsilon }{2},x)$. Then we can write 
\begin{equation*}
\frac{1}{2}({\mathcal{F}}_{\partial f}+{\mathcal{F}}_{\partial f}^{\ast
}\circ i)(x,x^{\ast })\leq \langle x,x^{\ast }\rangle +\frac{\epsilon }{2}.
\end{equation*}%
Using the fact that ${\mathcal{F}}_{\partial f}\geq \langle \cdot ,\cdot
\rangle $, the last inequality yields 
\begin{equation}
\left( {\mathcal{F}}_{\partial f}^{\ast }\circ i\right) (x,x^{\ast })\leq
\langle x,x^{\ast }\rangle +\epsilon .
\end{equation}%
Equivalently, $x^{\ast }\in L^{{\mathcal{F}}_{\partial f}^{\ast }\circ
i}(\epsilon ,x)=T^{\mathrm{SE}}(\epsilon ,x)$ (see the proof of Corollary %
\ref{mut add}).
\end{proof}

\begin{Rem}
Since $h^{FY}$ is autoconjugate , we see that $\breve{T}_{h^{FY}}$ is
max-add. Indeed, this fact follows from Theorem \ref{Theo1} and \cite[%
Theorem 6.4]{SV} (see also Corollary \ref{cor:3.22}). Is this the only
max-add enlargement of $T:=\partial f$? The answer is no, since an example
in \cite{BWY10} shows three different autoconjugate convex representations
of a subdifferential operator, which result in three different max-add
enlargements.
\end{Rem}
\begin{acknowledgement}
  The authors would like to thank the two anonymous referees for their
  valuable comments and suggestions, which have led to an improved
  paper. The authors are very grateful to Heinz Bauschke and Benar Fux
  Svaiter for their comments and corrections on an earlier version of
  this manuscript. Bauschke kindly indicated to us an additional
  reference for Remark 2.5, while Svaiter kindly indicated that the
  results in \cite{S03} constitute the earliest proof of the "only if" part
  of Theorem 2.2.

\end{acknowledgement}
\bibliographystyle{plain}
\bibliography{myrefsBMLRT-3.bib}

\end{document}